\newcommand{\wt}{\widetilde}
\newcommand{\serie}[1]{\{#1_{n}\}_n}
\newcommand{\lowbiglquote}[1][18]{%
   \setbox0=\hbox{\fontsize{#1}{0}\selectfont``}%
   \setlength{\dimen0}{\ht0 - \heightof{A}}%
   \noindent\llap{\smash{\lower\dimen0\box0 }}}
\newcommand{\lowbigrquote}[1][18]{%
   \setbox0=\hbox{\fontsize{#1}{0}\selectfont''}%
   \setlength{\dimen0}{\ht0 - \heightof{A}}%
   \unskip\rlap{\smash{\lower\dimen0\box0 }}}
\newcommand{\ol}{\overline}
\newcommand{\ve}{\varepsilon}
\newcommand{\f}{\mathbb}
\newcommand{\cu}{\subseteq}
\newcommand{\GLT}{\sim_{GLT}}
\newcommand{\acs}{\xrightarrow{a.c.s.}}
\DeclareMathOperator{\diag}{diag}
\newtheorem{lemma}{Lemma}
\newtheorem{theorem}{Theorem}
\newtheorem{proposition}{Proposition}
\newtheorem{definition}{Definition}
\newtheorem{remark}{Remark}
\newtheorem{example}{Example}
\newtheorem{conjecture}{Conjecture}
\numberwithin{theorem}{section}
\title{Conjectures on spectral properties of ALIF algorithm}
\author{Giovanni Barbarino\thanks{Department of Mathematics and Systems Analysis, Aalto University, Finland.
              giovanni.barbarino@aalto.fi},
        Antonio Cicone\thanks{DISIM, Universit\`a degli Studi dell'Aquila, L'Aquila, Italy.
              antonio.cicone@univaq.it}
}
\begin{document}

\maketitle

\begin{abstract}
A new decomposition method for nonstationary signals, named Adaptive Local Iterative Filtering (ALIF), has been recently proposed in the literature. Given its similarity with the Empirical Mode Decomposition (EMD) and its more rigorous mathematical structure, which makes feasible to study its convergence compared to EMD, ALIF has really good potentiality to become a reference method in the analysis of signals containing strong nonstationary components, like chirps, multipaths and whistles, in many applications, like Physics, Engineering, Medicine and Finance, to name a few.

In \cite{cicone2019spectral}, the authors analyzed the spectral properties of the matrices produced by the ALIF method, in order to study its stability. Various results are achieved in that work through the use of Generalized Locally Toeplitz (GLT) sequences theory, a powerful tool originally designed to extract information on the asymptotic behavior of the spectra for PDE discretization matrices.
In this manuscript we focus on answering some of the open questions contained in \cite{cicone2019spectral}, and in doing so, we also develop new theory and results for the GLT sequences.

Mathematics Subject Classification: 94A12, 68W40, 15A18, 47B06, 15B05
\end{abstract}

\begin{keywords}
iterative filtering, adaptive local iterative filtering, empirical mode decomposition, convergence analysis, eigenvalue distribution, generalized locally Toeplitz sequences, nonostationary signals, signal decomposition
\end{keywords}

\section{Introduction}\label{sec:introduction}

The decomposition and subsequent time--frequency analysis of nonstationary signals is an important topic of research which received a significant acceleration from the publication of the seminal work on the Empirical Mode Decomposition (EMD) method by Huang et al.\ \cite{huang1998empirical} in 1998. In particular, Huang and his collegues at NASA proposed to iteratively decompose a given signal into a finite number of ``simple components'' called Intrinsic Mode Functions (IMFs) which fulfil the following two properties:
\begin{itemize}[nolistsep,leftmargin=*]
	\item the number of zero crossings and the number of extrema must be either equal or differ at most by one;
	\item at any point, the mean value of the envelope connecting the local maxima and the envelope connecting the local minima must be zero.
\end{itemize}

The decompositions produced using the EMD algorithm attracted the interest of a high number of researchers and it proved to be successful for a wide range of applications, as testified by the number of citations, more than 14600\footnote{Based on Scopus database}, that the paper \cite{huang1998empirical} by itself has received so far. Nevertheless, the EMD algorithm is based on the iterative calculation of envelopes which are taylored on the specific signal under study. This makes really hard to analyze the EMD mathematically. Furthermore, this  approach has also stability problems in the presence of noise, as illustrated in \cite{wu2009ensemble}. Several variants of the EMD have been recently proposed to address this last problem, e.g. the Ensemble Empirical Mode Decomposition (EEMD) \cite{wu2009ensemble}, the complementary EEMD \cite{yeh2010complementary}, the complete EEMD \cite{torres2011complete}, the partly EEMD \cite{zheng2014partly}, the noise assisted multivariate EMD (NA-MEMD) \cite{ur2011filter}. They all allow to address the EMD stability issue as well as to reduce the so called mode mixing problem \cite{zheng2014partly}. But their mathematical understanding, like the EMD one, is far from be complete. Furthermore, from the prospective of nonstationarities handling, they pose new challenges since they worsen the mode--splitting problem present in the EMD algorithm \cite{yeh2010complementary}.
Over the years many alternative approaches to the EMD have been proposed, like, for instance, the sparse TF representation \cite{hou2011adaptive,hou2009variant}, the Geometric mode decomposition \cite{yu2018geometric}, the Empirical wavelet transform \cite{gilles2013empirical}, the Variational mode decomposition \cite{dragomiretskiy2013variational}, and similar techniques \cite{selesnick2011resonance,meignen2007new,pustelnik2012multicomponent}. All these methods are based on optimization with respect to an a priori chosen basis. The only alternative method proposed so far in the literature which is based on iterations, and hence does not require any a priori assumption on the signal under analysis, is named Iterative Filtering \cite{lin2009iterative,cicone2019nonstationary,cicone2016adaptive}.  This alternative iterative method, although published only recently, has already been used effectively in a wide variety of applied fields, like, for instance, in \cite{yu2010modeling,an2016application,an2016wind,an2016demodulation,cicone2016hyperspectral,an2017local,an2017vibration,kim2016multiscale,yang2017oscillation,cicone2017howNonlinear,cicone2017Geophysics,sharma2017automatic,mitiche2018classification,li2018entropy,materassi2019stepping,sfarra2019improving,spogli2019role,spogli2019URSIrole,papini2020multidimensional,piersanti2020inquiry,ghobadi2020disentangling,ghobadi2020comparative}. The IF algorithm structure resembles the EMD one. Its key difference is in the way the signal moving average is computed, i.e., via correlation of the signal with an a priori chosen filter function, whereas, in the EMD--based methods, it is computed as average between two envelopes.
This apparently simple difference opens the doors to a complete mathematical analysis of the IF method \cite{huang2009convergence,cicone2016adaptive,cicone2017multidimensional,cicone2019MFIF,cicone2020iterative,cicone2020study,cicone2020numerical,cicone2020oneortwo,stallone2020new}. The only problem in the IF method is its limitation in the variability of the instantaneous frequency of each single IMF component. This becomes an issue when we are dealing with signals which contain strong nonstationarities, like the so called chirps and whistles. This is the mode--splitting problem which effects also EMD and derived algorithms \cite{yeh2010complementary}. To solve this problem, the Adaptive Local Iterative Filtering (ALIF) algorithm has been recently proposed in \cite{cicone2016adaptive}. ALIF is a flexible generalization of IF  which completely overcome the limitations of the IF method by computing wisely chosen local and adaptive signal averages.
This makes ALIF algorithm an extremely promising and unique technique for the extraction of chirps from nonstationary signals. However, the ALIF convergence cannot be guaranteed a priori yet. Some advances have been recently achieved in the literature \cite{cicone2019spectral,cicone2020convergence}, but the main questions are still open. In particular in \cite{cicone2019spectral} the authors propose two conjectures which we discuss thoroughly in this work.

The rest of this work is organized as follows. In Section \ref{sec:GLT} we recall all the basic mathematical tools required to analyze the ALIF iteration matrix asymptotic spectral properties. In Section \ref{sec:ALIF} we recall the ALIF methods and the conjectures originally proposed in \cite{cicone2019spectral}. Sections \ref{sec:Conj1} and \ref{sec:Conj2} are devoted  to the analysis of the two conjectures, for which we need some technical and auxiliary results reported and proved in the appendix.
In particular, Appendix \ref{sec:app} contains some novel contributions to the theory of GLT sequences and spectral symbols.

\section{Spectral analysis tools}\label{sec:GLT}
We present in this section the tools for analyzing the asymptotic spectral properties of the ALIF iteration matrix.
Throughout this paper, a matrix-sequence is any sequence of the form $\{A_n\}_n$, where $A_n$ is a square matrix of size $n$.

If $A$ is an $n\times n$ matrix and $1\le p\le\infty$, we denote by $\|A\|_p$ the Schatten $p$-norm of $A$, i.e., the $p$-norm of the vector $(\sigma_1(A),\ldots,\sigma_n(A))$ formed by the singular values of $A$. The Schatten $\infty$-norm $\|A\|_\infty$ is the largest singular value of $A$ and coincides with the spectral norm $\|A\|$. The Schatten 2-norm $\|A\|_2$ coincides with the Frobenius norm, i.e., $\|A\|_2=(\sum_{i,j=1}^n|a_{ij}|^2)^{1/2}$.

\subsection{Singular value and eigenvalue distribution of a matrix-sequence}
Let $C_c(\mathbb C)$ be the space of continuous complex-valued functions with bounded support defined on $\mathbb C$ and let $\mu_p$ be the Lebesgue measure in $\mathbb R^p$. If $A$ is a square matrix of size~$n$, the singular values and the eigenvalues of $A$ are denoted by $\sigma_1(A),\ldots,\sigma_n(A)$ and $\lambda_1(A),\ldots,\lambda_n(A)$, respectively.

\begin{definition}
	Let $\{A_n\}_n$ be a matrix-sequence and let $f:D\subset\mathbb R^p\to\mathbb C$ be a measurable function defined on a set $D$ with $0<\mu_p(D)<\infty$.
	\begin{itemize}[nolistsep,leftmargin=*]
		\item We say that $\{A_n\}_n$ has a singular value distribution described by $f$, and we write $\{A_n\}_n\sim_\sigma f$, if for all $F\in C_c(\mathbb C)$ we have
		\[ \lim_{n\to\infty}\frac1{n}\sum_{i=1}^{n}F(\sigma_i(A_n))=\frac1{\mu_p(D)}\int_DF(|f(y_1,\ldots,y_p)|){\rm d}y_1\ldots{\rm d}y_p. \]
		\item We say that $\{A_n\}_n$ has an eigenvalue distribution described by $f$, and we write $\{A_n\}_n\sim_\lambda f$, if for all $F\in C_c(\mathbb C)$ we have
		\[ \lim_{n\to\infty}\frac1{n}\sum_{i=1}^{n}F(\lambda_i(A_n))=\frac1{\mu_p(D)}\int_DF(f(y_1,\ldots,y_p)){\rm d}y_1\ldots{\rm d}y_p. \]
	\end{itemize}
	If $\{A_n\}_n$ has both a singular value and an eigenvalue distribution described by $f$, we write $\{A_n\}_n\sim_{\sigma,\lambda}f$.
\end{definition}

\subsection{Informal meaning of the singular value and eigenvalue distribution}
Assuming $f$ is Riemann-integrable, the eigenvalue distribution $\{A_n\}_n\sim_\lambda f$ has the following informal meaning \cite[Section~3.1]{GLT-book}: all the eigenvalues of $A_n$, except possibly for $o(n)$ outliers, are approximately equal to the samples of $f$ over a uniform grid in $D$ (for $n$ large enough).
For instance, if $p=1$ and $D=[a,b]$, then, assuming we have no outliers, the eigenvalues of $A_n$ are approximately equal to
\[ f\Bigl(a+i\,\frac{b-a}{n}\Bigr),\quad i=1,\ldots,n, \]
for $n$ large enough. Similarly, if $p=2$, $n=m^2$ and $D=[a_1,b_1]\times [a_2,b_2]$, then, assuming we have no outliers, the eigenvalues of $A_n$ are approximately equal to
\[ f\Bigl(a_1+i\,\frac{b_1-a_1}m,\,\,a_2+j\,\frac{b_2-a_2}m\Bigr),\quad i,j=1,\ldots,m, \]
for $n$ large enough. A completely analogous meaning can also be given for the singular value distribution $\{A_n\}_n\sim_\sigma f$.

\subsection{Zero-distributed sequences}
A matrix-sequence $\{Z_n\}_n$ such that $\{Z_n\}_n\sim_\sigma0$ is referred to as a zero-distributed sequence. In other words, $\{Z_n\}_n$ is zero-distributed if and only if $\lim_{n\to\infty}\frac1{n}\sum_{i=1}^nF(\sigma_i(Z_n))=F(0)$ for all $F\in C_c(\mathbb C)$.
Proposition~\ref{res:zero-dis} is proved in \cite[Section~3.4]{GLT-book} and provides an important characterization of zero-distributed sequences together with a useful sufficient condition for detecting such sequences. For convenience, throughout this paper we use the natural convention $1/\infty=0$.
\begin{proposition}\label{res:zero-dis}
	Let $\{Z_n\}_n$ be a matrix-sequence.
	\begin{itemize}[nolistsep,leftmargin=*]
		\item $\{Z_n\}_n$ is zero-distributed if and only if $Z_n=R_n+N_n$ with
		\[ \lim_{n\to\infty}n^{-1}{\rm rank}(R_n)=\lim_{n\to\infty}\|N_n\|=0. \]
		\item $\{Z_n\}_n$ is zero-distributed if there is a $p\in[1,\infty]$ such that
		\[ \lim_{n\to\infty}n^{-1/p}\|Z_n\|_p=0. \]
	\end{itemize}
\end{proposition}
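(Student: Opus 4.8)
The plan is to reduce the whole proposition to one elementary equivalence: a matrix-sequence $\{Z_n\}_n$ satisfies $\{Z_n\}_n\sim_\sigma0$ if and only if, for every $\varepsilon>0$,
\[
\frac1n\,\#\{i:\sigma_i(Z_n)\ge\varepsilon\}\ \longrightarrow\ 0\qquad(n\to\infty).
\]
To establish this I would use that each $F\in C_c(\mathbb C)$ is bounded, say $|F|\le M$, and uniformly continuous with some modulus of continuity $\omega_F$. Splitting $\frac1n\sum_{i=1}^nF(\sigma_i(Z_n))$ according to whether $\sigma_i(Z_n)<\varepsilon$ or $\sigma_i(Z_n)\ge\varepsilon$, the first group is within $\omega_F(\varepsilon)$ of $F(0)$ on average, while the second contributes at most $M\,n^{-1}\#\{i:\sigma_i(Z_n)\ge\varepsilon\}$; letting $n\to\infty$ and then $\varepsilon\to0$ proves the forward implication. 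For the converse, testing the definition of $\sim_\sigma0$ against a fixed nonnegative radial bump $F\in C_c(\mathbb C)$ with $F\equiv0$ near $0$ and $F\equiv1$ on $\varepsilon\le|z|\le\varepsilon+1$ gives $\frac1n\#\{i:\sigma_i(Z_n)\ge\varepsilon\}\le\frac1n\sum_iF(\sigma_i(Z_n))\to F(0)=0$, and $\varepsilon>0$ is arbitrary.

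For the ``if'' part of the first bullet, suppose $Z_n=R_n+N_n$ with $r_n:=\rank(R_n)=o(n)$ and $\|N_n\|\to0$. The only tool needed is the Weyl-type singular-value inequality $\sigma_{i+j-1}(A+B)\le\sigma_i(A)+\sigma_j(B)$: applied with $A=N_n$, $B=R_n$ and $j=r_n+1$ (so that $\sigma_{r_n+1}(R_n)=0$), it shows that $\sigma_\ell(Z_n)\le\sigma_{\ell-r_n}(N_n)\le\|N_n\|$ for every $\ell>r_n$; that is, all but at most $r_n$ of the singular values of $Z_n$ are bounded by $\|N_n\|$. Hence, for any $\varepsilon>0$, as soon as $\|N_n\|<\varepsilon$ one has $\#\{i:\sigma_i(Z_n)\ge\varepsilon\}\le r_n$, so $n^{-1}\#\{i:\sigma_i(Z_n)\ge\varepsilon\}\le r_n/n\to0$, and the equivalence above yields $\{Z_n\}_n\sim_\sigma0$.

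For the ``only if'' part, assume $\{Z_n\}_n\sim_\sigma0$. By the equivalence, for each $k\in\mathbb N$ there is $n_k$ with $n^{-1}\#\{i:\sigma_i(Z_n)\ge1/k\}<1/k$ for all $n\ge n_k$, and we may take $n_1<n_2<\cdots$. For $n$ in the block $n_k\le n<n_{k+1}$, write the SVD $Z_n=U_n\Sigma_nV_n^*$, let $R_n$ collect the terms of this expansion with $\sigma_i(Z_n)>1/k$ and let $N_n$ collect the rest; then $Z_n=R_n+N_n$, $\|N_n\|\le1/k$, and $\rank(R_n)=\#\{i:\sigma_i(Z_n)>1/k\}<n/k$ (for $n<n_1$ set $R_n=Z_n$, $N_n=0$, which changes nothing asymptotically). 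Since $k\to\infty$ as $n\to\infty$, both $\|N_n\|\to0$ and $n^{-1}\rank(R_n)\to0$. This diagonal/exhaustion step --- choosing a single threshold that decreases slowly enough to work simultaneously for all $n$ while still forcing the rank fraction to vanish --- is the one point demanding genuine care; everything else is routine.

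Finally, the second bullet. If $p=\infty$ then $n^{-1/p}\|Z_n\|_p=\|Z_n\|\to0$, so $R_n=0$, $N_n=Z_n$ satisfy the first bullet and we are done. If $p<\infty$, then $n^{-1}\|Z_n\|_p^{p}=\bigl(n^{-1/p}\|Z_n\|_p\bigr)^{p}\to0$, and Markov's inequality applied to the singular values gives, for every $\varepsilon>0$,
\[
\frac1n\,\#\{i:\sigma_i(Z_n)\ge\varepsilon\}\ \le\ \frac1{\varepsilon^{p}}\cdot\frac1n\sum_{i=1}^n\sigma_i(Z_n)^{p}\ =\ \frac1{\varepsilon^{p}}\cdot\frac{\|Z_n\|_p^{p}}{n}\ \longrightarrow\ 0,
\]
whence $\{Z_n\}_n\sim_\sigma0$ by the equivalence (equivalently, one may build the decomposition of the first bullet by the same exhaustion argument). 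In summary, the only genuinely nontrivial ingredient is the exhaustion construction in the ``only if'' direction; the Weyl inequality and the Markov bound are standard, and the whole statement is glued together by the $\varepsilon$-proportion reformulation.
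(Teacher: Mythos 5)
Your overall strategy — reduce everything to the equivalence between $\{Z_n\}_n\sim_\sigma 0$ and the $\varepsilon$-proportion condition $n^{-1}\#\{i:\sigma_i(Z_n)\ge\varepsilon\}\to 0$, then deduce the first bullet via the Weyl inequality and an SVD truncation, and the second via Markov — is the standard one (the paper simply cites \cite[Section~3.4]{GLT-book}, whose proof proceeds along essentially these lines), and the first-bullet ``if'' direction, the SVD/exhaustion construction, and the Markov step are all fine. However, there is a genuine error in your proof of the ``converse'' half of the auxiliary equivalence, i.e.\ $\{Z_n\}_n\sim_\sigma 0\Rightarrow n^{-1}\#\{\sigma_i\ge\varepsilon\}\to 0$. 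You choose $F\in C_c(\mathbb C)$ with $F\equiv 0$ near $0$ and $F\equiv 1$ on $\varepsilon\le|z|\le\varepsilon+1$, and claim $\frac1n\#\{i:\sigma_i(Z_n)\ge\varepsilon\}\le\frac1n\sum_iF(\sigma_i(Z_n))$. This inequality is false: since $F$ has compact support, $F(\sigma_i)=0$ whenever $\sigma_i$ lies above the support of $F$, yet those indices still contribute to the left-hand count. As written the argument only controls $\#\{i:\varepsilon\le\sigma_i\le\varepsilon+1\}$, not $\#\{i:\sigma_i\ge\varepsilon\}$, so it leaves open the possibility of a non-negligible fraction of singular values escaping to infinity. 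Since this direction of the equivalence is exactly what your ``only if'' part of the first bullet rests on, the gap propagates there.

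The fix is short but uses the opposite kind of test function: take $G\in C_c(\mathbb C)$ with $0\le G\le 1$, $G(0)=1$, and $\operatorname{supp}G\subset\{|z|<\varepsilon\}$. Then $\frac1n\sum_iG(\sigma_i(Z_n))\le\frac1n\#\{i:\sigma_i(Z_n)<\varepsilon\}=1-\frac1n\#\{i:\sigma_i(Z_n)\ge\varepsilon\}$, while the assumption $\{Z_n\}_n\sim_\sigma 0$ gives $\frac1n\sum_iG(\sigma_i(Z_n))\to G(0)=1$; hence $\limsup_n\frac1n\#\{i:\sigma_i(Z_n)\ge\varepsilon\}\le 0$. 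With this substitution the equivalence is correctly established and the remainder of your argument goes through unchanged; the point to internalize is that when the observable $F$ is compactly supported, upper bounds on $\#\{\sigma_i\ge\varepsilon\}$ must be extracted by bounding its complement, not by trying to dominate the count with $F$ directly.
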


\subsection{Sequences of diagonal sampling matrices}\label{sub:diag}
If $n\in\mathbb N$ and $a:[0,1]\to\mathbb C$, the $n$th diagonal sampling matrix generated by $a$ is the $n\times n$ diagonal matrix given by
\begin{equation*}
D_n(a)=\mathop{\rm diag}_{i=1,\ldots,n}a\Bigl(\frac in\Bigr).
\end{equation*}
$\{D_n(a)\}_n$ is called the sequence of diagonal sampling matrices generated by $a$.

\subsection{Toeplitz sequences}\label{sub:toep}
If $n\in\mathbb N$ and $f:[-\pi,\pi]\to\mathbb C$ is a function in $L^1([-\pi,\pi])$, the $n$th Toeplitz matrix generated by $f$ is the $n\times n$ matrix
\[ T_n(f)=[\hat f_{i-j}]_{i,j=1}^n=\begin{bmatrix}
\hat f_0 & \hat f_{-1} & \ \hat f_{-2} & \ \cdots & \ \ \cdots & \hat f_{-(n-1)} \\
\hat f_1 & \ddots & \ \ddots & \ \ddots & \ \ & \vdots\\
\hat f_2 & \ddots & \ \ddots & \ \ddots & \ \ \ddots & \vdots\\
\vdots & \ddots & \ \ddots & \ \ddots & \ \ \ddots & \hat f_{-2}\\
\vdots & & \ \ddots & \ \ddots & \ \ \ddots & \hat f_{-1}\\
\hat f_{n-1} & \cdots & \ \cdots & \ \hat f_2 & \ \ \hat f_1 & \hat f_0
\end{bmatrix}, \]
where the numbers $\hat f_k$ are the Fourier coefficients of $f$,
\[ \hat f_k=\frac1{2\pi}\int_{-\pi}^\pi f(\theta){\rm e}^{-{\rm i}k\theta}{\rm d}\theta,\quad k\in\mathbb Z. \]
$\{T_n(f)\}_n$ is called the Toeplitz sequence generated by $f$.

\subsection{Approximating classes of sequences}
The notion of approximating classes of sequences (a.c.s.)\ is fundamental to the theory of Generalized Locally Toeplitz (GLT) sequences and it is deeply studied in \cite[Chapter~5]{GLT-book}.

\begin{definition}\label{a.c.s.}
	Let $\{A_n\}_n$ be a matrix-sequence and let $\{\{B_{n,m}\}_n\}_m$ be a sequence of matrix-sequences. We say that $\{\{B_{n,m}\}_n\}_m$ is an approximating class of sequences (a.c.s.)\ for $\{A_n\}_n$ if the following condition is met: for every $m$ there exists $n_m$ such that, for $n\ge n_m$,
	\begin{equation*}\label{splitting}
	A_n=B_{n,m}+R_{n,m}+N_{n,m},\quad {\rm rank}(R_{n,m})\le c(m)n,\quad \|N_{n,m}\|\le\omega(m),
	\end{equation*}
	where $n_m,\,c(m),\,\omega(m)$ depend only on $m$, and $\displaystyle\lim_{m\to\infty}c(m)=\lim_{m\to\infty}\omega(m)=0$.
\end{definition}

Roughly speaking, $\{\{B_{n,m}\}_n\}_m$ is an a.c.s.\ for $\{A_n\}_n$ if, for large $m$, the sequence $\{B_{n,m}\}_n$ approximates $\{A_n\}_n$ in the sense that $A_n$ is eventually equal to $B_{n,m}$ plus a small-rank matrix (with respect to the matrix size $n$) plus a small-norm matrix. We will use the convergence notation $\{B_{n,m}\}_n\stackrel{\rm a.c.s.}{\longrightarrow}\{A_n\}_n$
to indicate that $\{\{B_{n,m}\}_n\}_m$ is an a.c.s.\ for $\{A_n\}_n$.
A useful criterion to test the a.c.s.\
convergence is provided in the next theorem \cite[Corollary~5.3]{GLT-book}.

\begin{theorem}\label{res:acs_crit}
	Let $\{A_n\}_n,\{B_{n,m}\}_n$ be sequences of matrices, with $A_n,B_{n,m}$ of size $n$, and let $1\le p<\infty$. Suppose that for every $m$ there exists $n_m$ such that, for $n\ge n_m$,
	\[ \|A_n-B_{n,m}\|_p^p\le\varepsilon(m,n)n, \]
	where $\displaystyle\lim_{m\to\infty}\limsup_{n\to\infty}\varepsilon(m,n)=0$. Then $\{B_{n,m}\}_n\stackrel{\rm a.c.s.}{\longrightarrow}\{A_n\}_n.$
\end{theorem}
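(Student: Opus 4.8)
The plan is to realize, for each $m$, the splitting $A_n=B_{n,m}+R_{n,m}+N_{n,m}$ required by Definition~\ref{a.c.s.} by thresholding the singular values of the error matrix $Z_{n,m}:=A_n-B_{n,m}$. First I would fix a threshold $\tau_m>0$ (to be chosen at the very end), take a singular value decomposition $Z_{n,m}=U_{n,m}\Sigma_{n,m}V_{n,m}^*$ with $\Sigma_{n,m}=\diag(\sigma_1,\dots,\sigma_n)$, $\sigma_1\ge\cdots\ge\sigma_n\ge 0$, and split $\Sigma_{n,m}=\Sigma_{n,m}'+\Sigma_{n,m}''$, where $\Sigma_{n,m}'$ keeps the singular values that exceed $\tau_m$ (and zeros out the others) and $\Sigma_{n,m}''$ keeps the rest. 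Setting $R_{n,m}=U_{n,m}\Sigma_{n,m}'V_{n,m}^*$ and $N_{n,m}=U_{n,m}\Sigma_{n,m}''V_{n,m}^*$ gives $Z_{n,m}=R_{n,m}+N_{n,m}$ with $\|N_{n,m}\|=\|\Sigma_{n,m}''\|\le\tau_m$ by construction, so the norm part is automatically controlled.

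The key quantitative step is bounding $\rank(R_{n,m})$, which equals the number $k_{n,m}$ of singular values of $Z_{n,m}$ strictly above $\tau_m$. Since each such singular value contributes at least $\tau_m^p$ to the sum $\sum_{i=1}^n\sigma_i^p=\|Z_{n,m}\|_p^p$ (here $p<\infty$ is used), the hypothesis $\|Z_{n,m}\|_p^p\le\varepsilon(m,n)\,n$ yields $k_{n,m}\,\tau_m^p\le\varepsilon(m,n)\,n$, i.e. $\rank(R_{n,m})\le\varepsilon(m,n)\,n/\tau_m^p$.

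It then remains to pick $\tau_m$ so that both $\|N_{n,m}\|$ and $n^{-1}\rank(R_{n,m})$ are bounded by quantities depending only on $m$ and vanishing as $m\to\infty$. Writing $\bar\varepsilon(m):=\limsup_{n\to\infty}\varepsilon(m,n)$, so $\bar\varepsilon(m)\to0$, a convenient choice is $\tau_m:=(\bar\varepsilon(m)+1/m)^{1/(2p)}$, which is strictly positive and tends to $0$. Then $\omega(m):=\tau_m$ controls $\|N_{n,m}\|$ and vanishes. For the rank, by definition of $\limsup$ there is $n_m'$ with $\varepsilon(m,n)\le\bar\varepsilon(m)+1/m$ for all $n\ge n_m'$; taking $n_m:=\max\{n_m',\ \text{the index supplied by the hypothesis}\}$, we get for $n\ge n_m$ that $n^{-1}\rank(R_{n,m})\le(\bar\varepsilon(m)+1/m)/\tau_m^p=(\bar\varepsilon(m)+1/m)^{1/2}=:c(m)$, which also tends to $0$. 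Hence $A_n=B_{n,m}+R_{n,m}+N_{n,m}$ for $n\ge n_m$ with $\rank(R_{n,m})\le c(m)n$, $\|N_{n,m}\|\le\omega(m)$, and $c(m),\omega(m)\to0$, which is exactly Definition~\ref{a.c.s.}; thus $\{B_{n,m}\}_n\acs\{A_n\}_n$.

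I do not anticipate a genuine obstacle: the whole argument reduces to the elementary device of cutting the singular value spectrum of the error matrix at a well-chosen level. The only delicate point is the bookkeeping of the two limits — the hypothesis controls $\varepsilon(m,n)$ only through its $\limsup$ in $n$, so one must pass to $n$ large (depending on $m$) before extracting the clean bound $\varepsilon(m,n)\le\bar\varepsilon(m)+1/m$, and ensure $n_m$ simultaneously absorbs the index coming from the hypothesis. A minor nuisance is keeping $\tau_m$ positive when $\bar\varepsilon(m)=0$, which is the reason for the $+1/m$ cushion in the definition of $\tau_m$.
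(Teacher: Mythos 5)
Your proof is correct, and it is essentially the standard argument used in the literature: the paper does not prove Theorem~\ref{res:acs_crit} itself but cites it as Corollary~5.3 of the GLT book, where the same singular-value thresholding decomposition (cutting $A_n-B_{n,m}$ at a level $\tau_m$, bounding the count of large singular values by the $p$-norm hypothesis, and tuning $\tau_m$ against $\limsup_n\varepsilon(m,n)$) is precisely the mechanism. Your bookkeeping of the two limits and the $+1/m$ cushion to keep $\tau_m$ positive are both handled correctly, so there is nothing to flag.
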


\subsection{GLT sequences}
A GLT sequence $\{A_n\}_n$ is a special matrix-sequence equipped with a measurable function $\kappa:[0,1]\times[-\pi,\pi]\to\mathbb C$, the so-called symbol (or kernel). We use the notation $\{A_n\}_n\sim_{\rm GLT}\kappa$ to indicate that $\{A_n\}_n$ is a GLT sequence with symbol $\kappa$.
The properties of GLT sequences that we shall need in this paper are listed below; the corresponding proofs can be found in \cite{pert,GLT-book}.
\begin{enumerate}[leftmargin=39pt]
	\item[\textbf{GLT\,1.}] If $\{A_n\}_n\sim_{\rm GLT}\kappa$ then $\{A_n\}_n\sim_\sigma\kappa$. If $\{A_n\}_n\sim_{\rm GLT}\kappa$ and the matrices $A_n$ are Hermitian then $\{A_n\}_n\sim_\lambda\kappa$.
	\item[\textbf{GLT\,2.}]
	 If $\{A_n\}_n\sim_{\rm GLT}\kappa$ and $A_n=X_n+Y_n$, where
	\begin{itemize}
		\item every $X_n$ is Hermitian,
		\item $n^{-1/2}\|Y_n\|_2\to 0$,
	\end{itemize}
	then $\{A_n\}_n\sim_{\lambda}\kappa$.
	\item[\textbf{GLT\,3.}] We have
	\begin{itemize}[nolistsep,leftmargin=*]
		\item $\{T_n(f)\}_n\sim_{\rm GLT}\kappa(x,\theta)=f(\theta)$ if $f\in L^1([-\pi,\pi])$,
		\item $\{D_n(a)\}_n\sim_{\rm GLT}\kappa(x,\theta)=a(x)$ if $a:[0,1]\to\mathbb C$ is Riemann-integrable,
		\item $\{Z_n\}_n\sim_{\rm GLT}\kappa(x,\theta)=0$ if and only if $\{Z_n\}_n\sim_\sigma0$.
	\end{itemize}
	\item[\textbf{GLT\,4.}] If $\{A_n\}_n\sim_{\rm GLT}\kappa$ and $\{B_n\}_n\sim_{\rm GLT}\xi$ then
	\begin{itemize}[nolistsep,leftmargin=*]
		\item $\{A_n^*\}_n\sim_{\rm GLT}\overline\kappa$,
		\item $\{\alpha A_n+\beta B_n\}_n\sim_{\rm GLT}\alpha\kappa+\beta\xi$ for all $\alpha,\beta\in\mathbb C$,
		\item $\{A_nB_n\}_n\sim_{\rm GLT}\kappa\xi$.
	\end{itemize}
	\item[\textbf{GLT\,5.}] $\{A_n\}_n\sim_{\rm GLT}\kappa$ if and only if there exist GLT sequences $\{B_{n,m}\}_n\sim_{\rm GLT}\kappa_m$ such that $\{B_{n,m}\}_n\stackrel{\rm a.c.s.}{\longrightarrow}\{A_n\}_n$ and $\kappa_m\to\kappa$ in measure over $[0,1]\times[-\pi,\pi]$.
\end{enumerate}
If $\{A_n\}_n$ has singular value, eigenvalue distribution and GLT symbol described by a single function $\kappa$, we write $\{A_n\}_n\sim_{GLT,\sigma,\lambda}\kappa$.
\section{The ALIF method}\label{sec:ALIF}

\subsection{Terminology}\label{sec:term}
Throughout this paper, any real function $g:\mathbb R\to\mathbb R$ is also referred to as a signal. Without loss of generality, we assume that the domain on which every signal $g$ is studied is the reference interval $[0,1]$. Outside the reference interval, the signal is usually not known and so, whenever necessary, we have to make assumptions, that is, we have to impose boundary conditions. The extrema of a signal $g$ are the points belonging to $(0,1)$ where $g$ attains its local maxima and minima.
If $\mathbf g=[\mathbf g_0,\ldots,\mathbf g_{n-1}]$ is a vector in $\mathbb R^n$, the extrema of $\mathbf g$ are the indices belonging to $\{1,\ldots,n-2\}$ where $\mathbf g$ attains its local maxima and minima, i.e., the indices $j\in\{1,\ldots,n-2\}$ such that $\mathbf g_j>\max(\mathbf g_{j-1},\mathbf g_{j+1})$ or $\mathbf g_j<\min(\mathbf g_{j-1},\mathbf g_{j+1})$.
A filter $k$ is an even, nonnegative, bounded, measurable,\footnote{Throughout this paper, the word ``measurable'' always means ``Lebesgue measurable''.} and compactly supported function from $\mathbb R$ to $\mathbb R$ satisfying the normalization condition $\int_{\mathbb R}k(y){\rm d}y=1$.
  We refer to $\ell=\sup\{y>0:\,k(y)>0\}$ as the length of the filter $k$. Note that $0<\ell<\infty$ and the support of $k$ is contained in $[-\ell,\ell]$.

\subsection{The ALIF method}\label{sec:ALIFc}

\begin{algorithm}
	\caption{\textbf{(ALIF Algorithm)} ${\rm IMFs=ALIF}(g)$}
	\begin{algorithmic}
		\STATE IMFs = $\left\{\right\}$
		\STATE initialize the remaining signal $r=g$
		\WHILE{the number of extrema of $r$ is $\geq 2$}
		\STATE for each $x\in[0,1]$ compute the filter $k_x$, whose length $\ell(x)$ changes from $x$ to $x$ based on $r$ itself
		\STATE $g_1=r$
		\STATE $m=1$
		\WHILE{the stopping criterion is not satisfied}
		\STATE compute the moving average $f_m$ of the signal $g_m$ as\\
		$f_m(x)=\int_{\mathbb R}g_m(y)k_x(x-y){\rm d}y$
		\STATE $g_{m+1}=g_m-f_m$
		\STATE $m=m+1$
		\ENDWHILE
		\STATE ${\rm IMFs}={\rm IMFs}\cup\{g_m\}$
		\STATE $r=r-g_m$
		\ENDWHILE
	\end{algorithmic}
	\label{alg:ALIF}
\end{algorithm}

As mentioned in Section~\ref{sec:introduction}, the ALIF method is an iterative procedure whose purpose is to decompose a signal $g$ into a finite number of ``simple components'', the so-called IMFs of $g$.
Algorithm~\ref{alg:ALIF} shows the pseudocode of the ALIF method, in which the input is a signal $g$ and the output is the set of the IMFs of $g$. The ALIF algorithm contains two loops. The inner loop captures a single IMF, while the outer loop produces all the IMFs embedded in $g$. Considering the first iteration of the ALIF outer loop in which $g_1=g$, we see that the key idea to extract the first IMF consists in computing the moving average of $g_m$ and subtract it from $g_m$ itself so as to capture the fluctuation part $\mathcal S_m(g_m)=g_m-f_m=g_{m+1}$. This is repeated iteratively and, assuming convergence, the first IMF is obtained as ${\rm IMF}_1=\lim_{m\to\infty}\mathcal S(g_m)$. In practice, however, we cannot let $m$ go to $\infty$ and we have to use a stopping criterion, as indicated in Algorithm~\ref{alg:ALIF}. Assuming convergence, one can stop the inner loop at the first index $m$ such that the difference $g_{m+1}-g_m$ is small in some norm (possibly, a norm for which the convergence is known). A safer stopping criterion also imposes a limit on the maximum number of iterations. This method of IMF extraction is shared with the EMD and IF algorithms and the only difference consists in the computation of the moving average. In the ALIF method, it is computed through the convolution with a filter $k_x$, that can depend on the point $x$. In practical applications of the ALIF method, first a length function $\ell(x)$ is computed based on the signal $g_1$, and then $k_x$ is chosen as\,\footnote{Note that $k_x$ in \eqref{k-choice'} is indeed a filter according to the terminology introduced in Section~{\rm \ref{sec:term}}.}
	\begin{equation}\label{k-choice'}
	k_x(y)=\frac{k\bigl(\frac{y}{\ell(x)}\bigr)}{\ell(x)},
	\end{equation}
	where $k$ is an a priori fixed filter with length $1$, so that the length of $k_x$ is $\ell(x)$.
 Once the first IMF is obtained, to produce the second IMF we apply the previous process to the remaining signal $r=g-{\rm IMF}_1$. We then iterate this procedure to obtain all the IMFs of $g$, and we stop as soon as the remaining signal becomes a trend signal, meaning that it possesses at most one extremum. Clearly, the sum of all the IMFs of $g$ produced by the ALIF method with the final trend signal $r$ is equal to $g$.

\begin{remark}\label{ell(x)}
	In the case where $\ell(x)$ is chosen at each iteration of the outer loop as a constant $\ell$, depending on the remaining signal $r$ but not on $x$, the ALIF method reduces to the IF method, whose convergence has been studied in~{\rm\cite{cicone2019spectral,huang2009convergence}}.
\end{remark}


\subsection{The Discrete ALIF method}\label{sec:ALIFd}
In practice, we usually do not know a signal $g$ on the whole reference interval $[0,1]$. What we actually know are the samples of $g$ over a fine grid in $[0,1]$. We therefore need a discrete version of the ALIF algorithm, which is able to (approximately) capture the IMFs of $g$ by exploiting this sole information.
From now on, we make the following assumptions.
\begin{itemize}[nolistsep,leftmargin=*]
	\item For any signal $g$, no other information about $g$ is available except for its samples at the $n$ points $x_i=\frac{i}{n-1}$, $i=0,\ldots,n-1$. Moreover, $g=0$ outside $[0,1]$ (so we are imposing homogeneous Dirichlet boundary conditions).
	\item The filter $k_x$ is defined as in \eqref{k-choice'} in terms of an a priori fixed filter $k$ with length 1.
\end{itemize}
Under these hypotheses, what we may ask to a discrete version of the ALIF algorithm is to compute the (approximated) samples of the IMFs of $g$ at the sampling points $x_i$, $i=0,\ldots,n-1$. This is done by approximating the moving average at the points $x_i$ through the rectangle formula or any other quadrature rule. Setting for convenience $x_j=\frac{j}{n-1}$ for all $j\in\mathbb Z$, the rectangle formula yields the approximation
\begin{equation*}
f_m(x_i)=\int_{\mathbb R}g_m(y)k_{x_i}(x_i-y){\rm d}y\approx\frac{1}{n-1}\sum_{j\in\mathbb Z}g_m(x_j)k_{x_i}(x_i-x_j),\quad i=0,\ldots,n-1,
\end{equation*}
where we note that the sum is finite because $k_{x_i}$ is compactly supported. Assuming that, at each iteration of the ALIF inner loop, the signal $g_m$ is set to zero outside the reference interval $[0,1]$, the previous equation becomes
\begin{equation*}
f_m(x_i)\approx\frac{1}{n-1}\sum_{j=0}^{n-1}g_m(x_j)k_{x_i}(x_i-x_j),\quad i=0,\ldots,n-1.
\end{equation*}
We then obtain
\begin{align}\label{ad2}
g_{m+1}(x_i)&=g_m(x_i)-f_m(x_i)\notag\\
&\approx g_m(x_i)-\frac1{n-1}\sum_{j=0}^{n-1}g_m(x_j)k_{x_i}(x_i-x_j),\quad i=0,\ldots,n-1.
\end{align}
Denoting by $\mathbf g=[g(x_0),\ldots,g(x_{n-1})]^T$ the vector containing the samples of the signal $g$ at the sampling points $x_i$, we can rewrite \eqref{ad2} in matrix form as follows:
\begin{equation}\label{replacer}
\mathbf g_{m+1}\approx (I_n-K_n)\mathbf g_m,
\end{equation}
where $I_n$ is the $n\times n$ identity matrix and
\begin{equation}\label{ALIF_matrix}
K_n\hspace{-1pt}=\hspace{-1pt}\left[\frac1{n-1}\,k_{x_i}(x_i-x_j)\right]_{i,j=0}^{n-1}\hspace{-2pt}=\hspace{-1pt}\left[\frac{k\bigl(\frac{x_i-x_{j_{\vphantom{1}}}}{\ell(x_i)}\bigr)}{(n-1)\ell(x_i)}\right]_{i,j=0}^{n-1}\hspace{-2pt}=\hspace{-1pt}\left[\frac{k\bigl(\frac{i-j}{(n-1)\ell(x_i)}\bigr)}{(n-1)\ell(x_i)}\right]_{i,j=0}^{n-1}.
\end{equation}

\begin{algorithm}
	\caption{\textbf{(Discrete ALIF Algorithm)} $\mathbf{IMFs}={\rm ALIF}(\mathbf g)$}
	\begin{algorithmic}
		\STATE $\mathbf{IMFs} = \left\{\right\}$
		\STATE initialize the remaining signal $\mathbf r=\mathbf g$
		\WHILE{the number of extrema of $\mathbf r$ is $\geq 2$}
		\STATE for each $x_i=x_0,\ldots,x_{n-1}$ compute the filter $k_{x_i}$, whose length $\ell(x_i)$ changes from $x_i$ to $x_i$ based on $\mathbf r$ itself
		\STATE $\mathbf g_1=\mathbf r$
		\STATE $m=1$
		\WHILE{the stopping criterion is not satisfied}
		\STATE extend $\mathbf g_m$ to $\mathbb Z$ by setting $(\mathbf g_m)_j=0$ for $j\not\in\{0,\ldots,n-1\}$
		\STATE compute the moving average $\mathbf f_m$ of $\mathbf g_m$ as\\
		$(\mathbf f_m)_i=\frac{1}{n-1}\sum_{j\in\mathbb Z}(\mathbf g_m)_jk_{x_i}(x_i-x_j),\ i=0,\ldots,n-1$
		\STATE $\mathbf g_{m+1}=\mathbf g_m-\mathbf f_m$
		\STATE $m=m+1$
		\ENDWHILE
		\STATE $\mathbf{IMFs}=\mathbf{IMFs}\cup\{\mathbf g_m\}$
		\STATE $\mathbf r=\mathbf r-\mathbf g_m$
		\ENDWHILE
	\end{algorithmic}
	\label{alg:ALIFd}
\end{algorithm}

The pseudocode for the Discrete ALIF method\,\footnote{Available at \url{www.cicone.com}.} is reported in Algorithm~\ref{alg:ALIFd}. The input is a vector $\mathbf g=[\mathbf g_0,\ldots,\mathbf g_{n-1}]^T=[g(x_0),\ldots,g(x_{n-1})]^T$ containing the samples of a signal $g$ at the sampling points $x_i=\frac{i}{n-1}$, $i=0,\ldots,n-1$, while the output is the set of vectors containing the (approximated) samples of the IMFs of $g$ at the same points $x_i$. Note that the first four lines inside the inner loop of Algorithm~\ref{alg:ALIFd} can be replaced by the sole equation $\mathbf g_{m+1}=(I_n-K_n)\mathbf g_m$, which is obtained from \eqref{replacer} by turning ``\,$\approx$\,'' into ``\,$=$\,''. Assuming convergence, the vector $\mathbf{IMF}_1$ containing the (approximated) samples of the first IMF is obtained as $\mathbf{IMF}_1=(I_n-K_n)^m\mathbf r$ with $\mathbf r=\mathbf g$ and $m$ large enough so that the stopping criterion is met. Similarly, $\mathbf{IMF}_2=(I_n-K_n)^m\mathbf r$ with $\mathbf r=\mathbf g-\mathbf{IMF}_1$ and $m$ large enough, $\mathbf{IMF}_3=(I_n-K_n)^m\mathbf r$ with $\mathbf r=\mathbf g-\mathbf{IMF}_1-\mathbf{IMF}_2$ and $m$ large enough, etc.
Note that the matrix $K_n$ used to compute $\mathbf{IMF}_i$ is different in general from the matrix $K_n$ used to compute $\mathbf{IMF}_j$ if $i\ne j$. Indeed, the matrix $K_n$ changes at every iteration of the outer loop because, although the filter $k$ is fixed, the length $\ell(x_i)$ depends on the remaining signal $\mathbf r$ and changes with it.

\begin{remark}\label{cc}
	A necessary condition for the convergence of the Discrete ALIF method is that
	\begin{equation}\label{cn}
	|1-\lambda_i(K_n)|\le1,\quad i=1,\ldots,n.
	\end{equation}
	Indeed, if \eqref{cn} is violated then $\rho(I_n-K_n)>1$ and $(I_n-K_n)^m\mathbf r$ diverges to $\infty$ (with respect to any norm of $\mathbb R^n$) for almost every vector $\mathbf r\in\mathbb R^n$.
\end{remark}

\subsection{Conjectures}
The ALIF iteration matrix $K_n$ is thus described by
\[
K_n = \frac{1}{n-1}\left[ \frac{k((x_i-x_j)/l_n(x_i))}{l_n(x_i)}  \right]_{i,j=0}^{n-1}
\]
where $x_i=\frac i{n-1}$ and $k,l_n$ are real-valued functions. Moreover $k$ is an even, non-negative, bounded, compactly supported measurable function with $\|k\|_1 =1$. We always consider $k$ of length 1, meaning that it is supported on $[-1,1]$, and we take $l_n(x)$ strictly positive on $[0,1]$.
From now on,  we always suppose that $L(x) := (n-1)l_n(x)$ is independent of $n$, so that we can rewrite $K_n$ as
\[
K_n = \left[ \frac{k((i-j)/L(x_i))}{L(x_i)}  \right]_{i,j=0}^{n-1}.
\]
In this case, it is possible to analyze the asymptotic spectral properties of the sequence $\serie K$ through the use of GLT theory introduced in Section \ref{sec:GLT}. If we denote
\[
\kappa(x,\theta)
:= \frac{1}{L(x)}\sum_{j\in\f Z}k\left(\frac{j}{L(x)}\right) e^{\textnormal{i} j\theta},
\]
then it is possible to come up with the following result.
\begin{lemma}[\cite{cicone2019spectral}]\label{res:known}
	Suppose that one of the following hypotheses is satisfied:
	\begin{itemize}
		\item  $L(x)$ is a step function,
		\item $k(x),L(x)$ are continuous functions with $L(x)\ge L_*>0$.
	\end{itemize}
	In this case,
	\[
	\serie K \sim_{GLT,\sigma,\lambda} \kappa(x,\theta).
	\]
\end{lemma}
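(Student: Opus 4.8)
The plan is to establish $\serie K\sim_{GLT}\kappa$ in each of the two cases and then to read off the singular value and eigenvalue distributions via the properties GLT\,1 and GLT\,2.

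\emph{The step-function case.} Suppose $L=\sum_{r=1}^{s}L_r\mathbf 1_{I_r}$ with $\{I_r\}$ a partition of $[0,1]$ into intervals and $L_r>0$. Put $f_r(\theta)=\frac1{L_r}\sum_{j\in\f Z}k(j/L_r)e^{\mathrm ij\theta}$, a trigonometric polynomial since $k$ is compactly supported, and note $\kappa(x,\theta)=f_r(\theta)$ for $x\in I_r$. Inspecting the entries of $K_n$ one gets the exact identity $K_n=\sum_{r=1}^{s}\Delta_n^{(r)}T_n(f_r)$, where $\Delta_n^{(r)}=\diag_{i=0,\ldots,n-1}\mathbf 1_{I_r}(x_i)$ and $T_n(f_r)$ is the Toeplitz matrix of Section~\ref{sub:toep}, because the $(i-j)$-th Fourier coefficient of $f_r$ equals $\frac1{L_r}k((i-j)/L_r)$. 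The diagonal matrix $\Delta_n^{(r)}-D_n(\mathbf 1_{I_r})$ has only $O(1)$ nonzero entries — those indices $i$ for which $i/(n-1)$ and $i/n$ straddle an endpoint of $I_r$ — hence is zero-distributed, so GLT\,3 and GLT\,4 give $\serie{\Delta^{(r)}}\sim_{GLT}\mathbf 1_{I_r}(x)$. Combining $\serie{T(f_r)}\sim_{GLT}f_r(\theta)$ (GLT\,3) with the product and sum rules (GLT\,4) yields $\serie K\sim_{GLT}\sum_r\mathbf 1_{I_r}(x)f_r(\theta)=\kappa(x,\theta)$.

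\emph{The continuous case.} Now let $k,L$ be continuous with $L\ge L_*>0$ and set $L^*:=\max_{[0,1]}L<\infty$. For each $m$ let $L^{(m)}$ be a step function with $L_*\le L^{(m)}\le L^*$ obtained by freezing $L$ on the cells of a uniform partition of mesh $1/m$, so $L^{(m)}\to L$ uniformly, and let $K_n^{(m)}$ be the matrix obtained from $K_n$ by replacing $L$ with $L^{(m)}$; by the previous case $\serie{K^{(m)}}\sim_{GLT}\kappa_m$ with $\kappa_m(x,\theta)=\frac1{L^{(m)}(x)}\sum_{j}k(j/L^{(m)}(x))e^{\mathrm ij\theta}$. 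Since $k$ vanishes outside $[-1,1]$, both $K_n$ and $K_n^{(m)}$ are banded with bandwidth at most $\lceil L^*\rceil$, and since $(L,t)\mapsto k(t/L)/L$ is uniformly continuous on the compact set $[L_*,L^*]\times[-L^*,L^*]$, uniform convergence $L^{(m)}\to L$ gives $\max_{i,j}|(K_n)_{ij}-(K_n^{(m)})_{ij}|\le\varepsilon(m)$ with $\varepsilon(m)\to0$, whence $\|K_n-K_n^{(m)}\|_2^2\le(2\lceil L^*\rceil+1)\varepsilon(m)^2\,n$; Theorem~\ref{res:acs_crit} with $p=2$ then gives $\serie{K^{(m)}}\acs\serie K$. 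The same continuity — using $k(\pm1)=0$, so that the Fourier coefficients created when $\lfloor L^{(m)}(x)\rfloor$ changes are negligible — shows $\kappa_m\to\kappa$ uniformly on $[0,1]\times[-\pi,\pi]$, in particular in measure. GLT\,5 now yields $\serie K\sim_{GLT}\kappa$.

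\emph{Singular values, eigenvalues, and the main obstacle.} In both cases GLT\,1 gives $\serie K\sim_\sigma\kappa$ at once. For the eigenvalues, write $K_n=X_n+Y_n$ with $X_n=\tfrac12(K_n+K_n^T)$ (real symmetric, hence Hermitian) and $Y_n=\tfrac12(K_n-K_n^T)$; since $k$ is even, $(Y_n)_{ij}=\tfrac12\bigl(\tfrac1{L(x_i)}k(\tfrac{i-j}{L(x_i)})-\tfrac1{L(x_j)}k(\tfrac{i-j}{L(x_j)})\bigr)$, which vanishes unless $|i-j|\le L^*$; for such indices $|x_i-x_j|\le L^*/(n-1)\to0$, so in the continuous case $\max_{i,j}|(Y_n)_{ij}|\to0$ and $n^{-1/2}\|Y_n\|_2\to0$, while in the step-function case $Y_n$ has only $O(1)$ nonzero entries so $\|Y_n\|_2=O(1)$ and again $n^{-1/2}\|Y_n\|_2\to0$. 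Hence GLT\,2 applies and gives $\serie K\sim_\lambda\kappa$, so altogether $\serie K\sim_{GLT,\sigma,\lambda}\kappa$. The step-function case is essentially bookkeeping once the exact decomposition $K_n=\sum_r\Delta_n^{(r)}T_n(f_r)$ is spotted; the real work, and the subtlest point since $K_n$ is not Hermitian, is to exploit the bandedness of $K_n$ (compact support of $k$) together with the uniform continuity of $(L,t)\mapsto k(t/L)/L$ — both for the Schatten-$2$ estimate driving the a.c.s.\ convergence and for the bound on $n^{-1/2}\|Y_n\|_2$ that lets GLT\,2 deliver the eigenvalue distribution.
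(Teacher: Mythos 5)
Your proof is correct; note, however, that the paper does not reprove this lemma (it cites it from \cite{cicone2019spectral}), so the fairest comparison is with the paper's proof of the strictly more general Theorem~\ref{teo}, and your route genuinely differs from that one. For the GLT symbol you split $K_n$ by the \emph{pieces} of the step function $L$ --- the exact finite sum $\sum_r\Delta_n^{(r)}T_n(f_r)$ --- and reach continuous $L$ by a step-function a.c.s.\ approximation; the paper instead splits by \emph{Fourier modes}, $K_n=\sum_p D'_n(f_p)T_n(e^{\mathrm i p\theta})$, and truncates the infinite series. For the eigenvalue distribution you exploit the continuity (or piecewise constancy) of $L$ and $k$ to control the antisymmetric part $Y_n=\tfrac12(K_n-K_n^T)$ directly: it is banded, within the band $|x_i-x_j|=O(1/n)$, so its entries are uniformly small in the continuous case and vanish except at $O(1)$ positions in the step case, whence $n^{-1/2}\|Y_n\|_2\to0$ and GLT\,2 applies in one stroke. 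Under the weaker hypotheses of Theorem~\ref{teo} (only Riemann integrability of each $f_p$) this direct bound is unavailable, which is exactly why the paper develops the almost-Hermitian machinery of Appendix~\ref{sec:app}: it establishes that each truncated sum of modes is almost-Hermitian via an $\alpha$-oscillation argument (Theorem~\ref{res:fund_alm_her}) and then propagates that property through the a.c.s.\ limit (Lemma~\ref{res:convergence_alm_Her}, Lemma~\ref{res:GLT_alm_Her}). Your argument is more elementary and fully adequate for the present lemma, but it would not extend to Conjecture~\ref{conj:ALIF_1}. One detail worth making explicit: the fact $k(\pm1)=0$, which you use to get $\kappa_m\to\kappa$ uniformly, is not a stated hypothesis; it holds only because in the second case $k$ is assumed continuous and supported in $[-1,1]$, and that step of your argument would break for a discontinuous filter.
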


Notice that $\kappa(x,\theta)$ is a real valued function, since $k$ is an even function. From Remark \ref{cc},  the necessary condition for the convergence of the Discrete ALIF method  can be written as follows:
\begin{equation}\label{eq:necessary_condition}
0\le \kappa(x,\theta) \le 2, \quad  (x,\theta) \in [0,1]\times[-\pi,\pi].
\end{equation}
Here we report the two conjectures from \cite{cicone2019spectral} that suggest how to generalize Lemma \ref{res:known} and that \eqref{eq:necessary_condition} may be actually a sufficient condition for the convergence of the ALIF method.

\begin{conjecture}\label{conj:ALIF_1}
	Suppose   that
	\[
	f_j(x) := \frac{k(j/L(x))}{L(x)}\text{  is Riemann-integrable over }[0,1]\quad \forall\,j\in \f Z.
	\]
	Then, for the sequence of ALIF iteration matrices $\serie K$,
	\[
	\serie K \sim_{GLT,\sigma,\lambda} \kappa(x,\theta).
	\]
\end{conjecture}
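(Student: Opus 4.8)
The plan is to exhibit $\serie K$ as an a.c.s.-limit of \emph{banded} GLT sequences whose symbols converge in measure to $\kappa$, and then to upgrade the resulting singular-value distribution to an eigenvalue distribution using that $K_n$ is asymptotically symmetric in an averaged sense. Since $k$ is even we have $f_{-j}=f_j$, and the $(p,q)$ entry of $K_n$ is exactly $f_{p-q}(x_p)$; hence, writing $\widetilde D_n(a):=\diag_{i=0,\dots,n-1}a\bigl(\tfrac{i}{n-1}\bigr)$ for the diagonal sampling matrix on the grid $x_i=\tfrac{i}{n-1}$, one has the exact identity $K_n=\sum_{j\in\f Z}\widetilde D_n(f_j)\,T_n(e^{\mathrm{i}j\theta})$, a finite sum. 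For $M\in\f N$ set $K_{n,M}:=\sum_{|j|\le M}\widetilde D_n(f_j)\,T_n(e^{\mathrm{i}j\theta})$, the matrix obtained from $K_n$ by zeroing all diagonals of index $|j|>M$. For each fixed $j$, $e^{\mathrm{i}j\theta}\in L^1([-\pi,\pi])$ gives $\{T_n(e^{\mathrm{i}j\theta})\}_n\GLT e^{\mathrm{i}j\theta}$ by \textbf{GLT\,3}, and $f_j$ being Riemann-integrable gives $\{D_n(f_j)\}_n\GLT f_j(x)$ by \textbf{GLT\,3}; an auxiliary lemma — that replacing the grid $i/n$ by $i/(n-1)$ alters a Riemann-integrable diagonal-sampling sequence only by a zero-distributed one, which via Proposition~\ref{res:zero-dis} reduces to $n^{-1}\sum_i|a(\tfrac{i}{n-1})-a(\tfrac{i+1}{n})|^2\to0$ — upgrades this to $\{\widetilde D_n(f_j)\}_n\GLT f_j(x)$. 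By \textbf{GLT\,4}, $\{K_{n,M}\}_n\GLT\kappa_M(x,\theta):=\sum_{|j|\le M}f_j(x)e^{\mathrm{i}j\theta}$.

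Next I would verify the two hypotheses of \textbf{GLT\,5}. The summands $\widetilde D_n(f_j)T_n(e^{\mathrm{i}j\theta})$ live on distinct diagonals, so $\|K_n-K_{n,M}\|_2^2=\sum_{|j|>M}\sum_{p}|f_j(x_p)|^2$ (the inner sum over $p$ with $0\le p-j\le n-1$). If $f_j(x_p)\ne0$ with $|j|>M\ge1$ then $L(x_p)\ge|j|>M\ge1$, so $|f_j(x_p)|\le\|k\|_\infty/M$, while $\sum_{\ell}f_\ell(x_p)=L(x_p)^{-1}\sum_\ell k(\ell/L(x_p))\le(2+L(x_p)^{-1})\|k\|_\infty\le3\|k\|_\infty$ (at most $2L(x_p)+1$ nonzero terms, each $\le\|k\|_\infty$); hence $\sum_{|j|>M}|f_j(x_p)|^2\le3\|k\|_\infty^2/M$ on $\{L(x_p)>M\}$ and vanishes elsewhere, giving $\|K_n-K_{n,M}\|_2^2\le3\|k\|_\infty^2\,n/M$ and, by Theorem~\ref{res:acs_crit} with $p=2$, $\{K_{n,M}\}_n\acs\serie K$. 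For the symbols, $f_j(x)=0$ unless $|j|\le L(x)$, so $\kappa_M(x,\cdot)=\kappa(x,\cdot)$ whenever $L(x)\le M$; thus $\{\kappa_M\ne\kappa\}\subseteq\{L>M\}\times[-\pi,\pi]$, and since $L$ is everywhere finite the measurable sets $\bigcup_{M'\ge M}\{\kappa_{M'}\ne\kappa\}$ shrink to $\varnothing$, so continuity of measure gives $\mu_2(\{\kappa_M\ne\kappa\})\to0$, i.e.\ $\kappa_M\to\kappa$ in measure on $[0,1]\times[-\pi,\pi]$. Then \textbf{GLT\,5} yields $\serie K\GLT\kappa$, and \textbf{GLT\,1} yields $\serie K\sim_\sigma\kappa$.

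It remains to get $\serie K\sim_\lambda\kappa$. The symbol $\kappa$ is real-valued (as $k$ is even) but $K_n$ is not Hermitian, so I would apply \textbf{GLT\,2} to the splitting $K_n=X_n+Y_n$ with $X_n:=\tfrac12(K_n+K_n^{*})$ Hermitian and $Y_n:=\tfrac12(K_n-K_n^{*})$, and show $n^{-1/2}\|Y_n\|_2\to0$. On the $j$-th diagonal $Y_n$ carries the entries $\tfrac12\bigl(f_j(x_p)-f_j(x_{p-j})\bigr)$ and $x_p-x_{p-j}=j/(n-1)$, so $\|Y_n\|_2^2=\tfrac14\sum_j\sum_p|f_j(x_p)-f_j(x_p-\tfrac{j}{n-1})|^2$. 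Splitting $\sum_j=\sum_{|j|\le M}+\sum_{|j|>M}$: the tail is $\le3\|k\|_\infty^2\,n/M$ by the bound above, and for each fixed $|j|\le M$ the corresponding term is $o(n)$ because $n^{-1}\sum_p|g(x_p)-g(x_p-\tfrac{j}{n-1})|^2\to0$ for every Riemann-integrable $g$ (a partition/oscillation estimate, or density of step functions). Sending $n\to\infty$ and then $M\to\infty$ gives $\limsup_n n^{-1}\|Y_n\|_2^2\le3\|k\|_\infty^2/M$ for all $M$, hence $n^{-1/2}\|Y_n\|_2\to0$; by \textbf{GLT\,2}, $\serie K\sim_\lambda\kappa$, and combined with the previous paragraph $\serie K\sim_{GLT,\sigma,\lambda}\kappa$.

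The main obstacle — essentially the only step that is not routine GLT bookkeeping — is the analytic fact that lies behind both the grid-shift lemma used in the first paragraph and the vanishing of the head of $\|Y_n\|_2^2$ in the third: translating a sampling grid by a bounded number of nodes perturbs a \emph{merely Riemann-integrable} symbol negligibly in the normalized Frobenius norm. The natural proof is a partition/oscillation estimate straight from the definition of Riemann integrability (reducing to step functions, where it is explicit), but one must make it uniform under the weak standing hypotheses here, where $k$ and $L$ may be very irregular and $L$ need not be bounded; I expect this to be packaged as one of the new GLT lemmas in the appendix. Everything else follows the standard pattern already used to establish Lemma~\ref{res:known}.
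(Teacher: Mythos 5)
Your proof is correct and follows essentially the same route as the paper's Theorem~\ref{teo}: expand $K_n=\sum_j D'_n(f_j)T_n(e^{\mathrm{i}j\theta})$, truncate to $K_{n,M}$, show a.c.s.\ convergence via the tail bound $|f_j|\le\|k\|_\infty/|j|$ and Theorem~\ref{res:acs_crit}, invoke GLT~5 together with $\kappa_M\to\kappa$ in measure, and then upgrade the singular-value distribution to an eigenvalue distribution by showing the skew-Hermitian part of $K_n$ has $o(\sqrt n)$ Frobenius norm so that GLT~2 applies. The one packaging difference is that the paper funnels that last step through the abstract ``almost-Hermitian sequences'' machinery (Theorem~\ref{res:fund_alm_her}, Lemmas~\ref{res:convergence_alm_Her} and~\ref{res:GLT_alm_Her}) rather than computing $\|Y_n\|_2$ directly by a head/tail split, and the technical kernel you correctly isolate and defer --- that $n^{-1}\sum_p|g(x_p)-g(x_{p-j})|^2\to0$ for Riemann-integrable $g$ and fixed $j$ --- is precisely what the paper proves inside Theorem~\ref{res:fund_alm_her} using the a.e.-continuity-of-oscillation Lemma~\ref{res:aux_osc}, while the grid-shift fact for $D'_n(a)$ is established in Lemma~\ref{res:diag_GLT} by $L^1$-approximation with continuous functions rather than by the oscillation estimate you sketch.
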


\begin{conjecture}\label{conj:ALIF_2}
	Assuming the hypotheses of Lemma \ref{res:known}, and \eqref{eq:necessary_condition}, the Discrete ALIF converges.
\end{conjecture}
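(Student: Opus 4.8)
\emph{Proof proposal.} The plan is to turn convergence of the Discrete ALIF inner loop into a spectral statement about $A_n:=I_n-K_n$ and then read that statement off from \eqref{eq:necessary_condition} together with the structure supplied by Lemma~\ref{res:known}. For fixed $n$, the iteration $\mathbf g_{m+1}=A_n\mathbf g_m$ converges for every starting vector if and only if each eigenvalue $\mu$ of $A_n$ satisfies $|\mu|<1$, or $\mu=1$ with $\mu$ semisimple; equivalently, $\operatorname{spec}(K_n)\subseteq\{0\}\cup\{z:|1-z|<1\}$ with the eigenvalue $0$ (when present) semisimple. So the conjecture is equivalent to establishing this inclusion for all large $n$.

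The first observation is that \eqref{eq:necessary_condition}, literally as stated, is not sufficient: with $L\equiv1$ and $k(y)=2(1-|y|)^3$ on $[-1,1]$ (this satisfies the second hypothesis of Lemma~\ref{res:known} and gives $\kappa\equiv2$) one gets $K_n=2I_n$, so $A_n=-I_n$ and $A_n^m\mathbf r=(-1)^m\mathbf r$ does not converge. Hence the statement must be corrected first: the degenerate boundary value $\kappa=2$ has to be excluded, and the eigenvalue $0$ of $K_n$ allowed only if semisimple. The natural corrected hypothesis is $0<\kappa<2$ uniformly (or, keeping \eqref{eq:necessary_condition}, $\kappa$ continuous and not a.e.\ equal to $2$, together with semisimplicity of $K_n$ at $0$); it is this corrected version I would try to prove.

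Under the correction, I would first dispatch the symmetric case, which already contains the IF method of Remark~\ref{ell(x)} (constant $L$): there $K_n=T_n(\kappa(\cdot))$ is real symmetric with spectrum in the closed interval spanned by $\kappa$, hence in $[0,2]$, and the classical strict localization of Toeplitz eigenvalues (a Toeplitz matrix generated by a nonnegative, not-a.e.-zero symbol is positive definite) confines $\operatorname{spec}(K_n)$ to the open interval $(0,2)$ whenever $\kappa$ is not a.e.\ constant, giving $\rho(A_n)<1$ and $A_n^m\to0$. The step-function case keeps this blockwise, since each diagonal block of $K_n$ on a constant piece of $L$ is an honest Toeplitz matrix $T_r(\kappa(x_p,\cdot))$ with spectrum in $[0,2]$, so the new ingredient is only the coupling through the rectangular off-diagonal blocks. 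The genuinely hard case is the non-symmetric one ($L$ a non-trivial step function, or $k,L$ continuous with $L\ge L_*>0$): writing $K_n=\sum_{j\in\mathbb Z}D_n(f_j)\,S_n^{(j)}$ with $f_j(x)=k(j/L(x))/L(x)$ and $S_n^{(j)}$ the partial shift matrices, the $i$-th row of $K_n$ is the $i$-th row of the symmetric Toeplitz matrix $T_n(\kappa(x_i,\cdot))$, whose spectrum lies in $[0,2]$; but the dependence of $x_i$ on the row index mixes these matrices, making $K_n$ non-normal with possibly complex spectrum. For this case the plan is: (i) use $\{K_n\}_n\sim_\lambda\kappa$ from Lemma~\ref{res:known} to trap the bulk of $\operatorname{spec}(K_n)$ in $[0,2]$; (ii) upgrade to a true spectral localization, ideally $\operatorname{spec}(K_n)\subseteq\{z:|1-z|\le1\}$ with no complex outliers, by combining the elementary estimates that the largest row-sum of $K_n$ is at most $\sup_x\kappa(x,0)\le2$ and the largest column-sum of $K_n$ is bounded by a constant with a finer argument, such as a blockwise bound on the numerical range of $K_n$ or a mildly $n$-dependent similarity symmetrizing $K_n$ up to a small-norm-plus-small-rank correction; and (iii) exclude an eigenvalue at $2$ and a non-trivial Jordan block at $0$, via continuity/perturbation arguments and the new GLT results announced for Appendix~\ref{sec:app}.

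The step I expect to be the main obstacle is (ii)--(iii). The symbol relation $\{K_n\}_n\sim_\lambda\kappa$ controls all but $o(n)$ of the eigenvalues and nothing about outliers or Jordan structure, whereas convergence of $A_n^m$ is killed by a single eigenvalue of $K_n$ outside $\{z:|1-z|\le1\}$, by an eigenvalue equal to $2$, or by a non-trivial Jordan block at $0$; and for non-normal $K_n$ there is no a priori reason the spectrum should be confined by the range of $\kappa$ (already the symmetric part $\tfrac12(K_n+K_n^{T})$ need not be positive semidefinite). Supplying that missing structural input — a field-of-values estimate, or an approximate symmetrization compatible with the GLT framework — is the crux of the problem.
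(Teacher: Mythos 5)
Your proposal sets out to \emph{prove} the conjecture (after a correction for the boundary case $\kappa\equiv2$), but the paper goes the other way: it \emph{disproves} Conjecture~\ref{conj:ALIF_2}, and even the tightened reformulation Conjecture~\ref{conj:ALIF_2'}, by explicit counterexample. This is not a matter of approach --- the statement you are trying to establish is false as posed.

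Two separate failures are exhibited in the paper, and you touch only the milder one. First, the ambiguity of ``converges'': in Example~\ref{ex:counterexample_2} the paper takes $k(0)=1$, $k(\pm1)=0$, $L\equiv1$, so $K_n=I_n$, $\kappa\equiv1$, and all hypotheses hold with $\kappa$ strictly inside $(0,2)$; the inner loop trivially converges ($\rho(I_n-K_n)=0$), yet $\bm g_2=(I_n-K_n)\bm r=0$ for every $\bm r$, so the number of extrema of the remaining signal never drops and the \emph{outer} loop never terminates. Your $\kappa\equiv2$, $K_n=2I_n$ example is in the same family (it is the boundary version), but it diagnoses only an endpoint pathology, not this fundamental ambiguity. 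Second, and more damagingly for your plan: the paper constructs, under the hypotheses of Lemma~\ref{res:known} ($L$ a step function, $k$ even, nonnegative, continuous, supported on $[-1,1]$) a $3\times3$ matrix $K_n$ satisfying $0\le\kappa(x,\theta)\le2$ with $\det K_n<0$, hence a negative eigenvalue and $\rho(I_n-K_n)>1$, so the \emph{inner} loop diverges. After normalizing $k$ by $\|k\|_1$, the induced symbol satisfies $0\le\kappa'<2$ strictly below $2$, which already kills your ``corrected'' conjecture unless you also impose $\kappa>0$ uniformly --- and there is no reason given (nor apparent) why excluding the single value $0$ would save it. You yourself identify the crux exactly right: for non-normal $K_n$ the symbol controls the bulk but says nothing about outlier eigenvalues, and a single outlier outside $\{z:|1-z|\le1\}$ destroys convergence. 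The step you were missing is not to supply the ``missing structural input,'' but to recognize that the lack of it is an actual obstruction and go look for a small concrete counterexample, which is what the paper does.
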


In the next sections we discuss both the conjectures, developing new tools to answer and analyze the questions.

\section{Conjecture 1}\label{sec:Conj1}

In Section \ref{sub:diag} and \ref{sub:toep}, we have introduced the fundamental GLT sequences $\{D_n(a)\}_n$ referred to a Riemann-integrable function $a$, and $T_n(f)$ referred to an $L^1$ function $f$.
 From
\begin{equation}\label{eq:diag'}
	K_n = \left[ \frac{k((i-j)/L(x_i))}{L(x_i)}  \right]_{i,j=0}^{n-1},\qquad
	f_p(x) := \frac{k(p/L(x))}{L(x)},
	\qquad
	D'_n(f_p):= \diag([f_p(x_i)]_{i=0,\dots,n-1})
\end{equation}
one can easily verify that the ALIF iteration matrix $K_n$ can be rewritten as
\[
K_n =  \sum_{p\in \f Z}D'_n(f_p)T_n(e^{\textnormal i p\theta}).
\]
The diagonal matrix
$D_n'(f_p)$
differs from $D_n(f_p)$ only because we are considering a different regular grid of points where to evaluate the function $f_p$. Anyway, it is possible to prove that the sequence $\{D_n(f_p) - D_n'(f_p) \}_n$ is zero-distributed whenever $f_p$ is Riemann-Integrable, so,
thanks to \textbf{GLT 3} and \textbf{GLT 4}, we can say that $\{D_n'(f_p) \}_n\GLT f_p$ and
\[
K_{n,m} := \sum_{p=-m}^{m} D_n'(f_p)T_n(e^{\textnormal{i} p\theta})\implies
\{ K_{n,m}\}_n \GLT \kappa_m:=
\sum_{p=-m}^{m}f_p(x) e^{\textnormal{i} p\theta}.
\]
An  argument similar to the one used for Lemma \ref{res:known} tells us that $\kappa_m$ is also a spectral symbol for $\{ K_{n,m}\}_n$. Since $\kappa_m\to\kappa$ almost everywhere, and $K_{n,m}$ is a truncation of $K_n$, it is natural to wonder whether a result like \textbf{GLT 5} is applicable in this situation to conclude that $\serie K\sim_{GLT,\sigma,\lambda} \kappa$, as reported in Conjecture \ref{conj:ALIF_1}.

It turns out that the result actually holds. The proof relies on several technical lemmata on a.c.s. convergence of certain matrix sequences, and some new results on spectral symbols: in order
to improve the readability of the paper, these are collected in Appendix \ref{sec:app}.

\begin{theorem}\label{teo}
	Let
	\[
	K_n = \left[ \frac{k((i-j)/L(x_i))}{L(x_i)}  \right]_{i,j=0}^{n-1},
	\]
	where
	$x_i=\frac i{n-1}$  and
	\begin{itemize}
		\item $k:\f R\to \f R$ is an even, non-negative, bounded measurable function, supported on $[-1,1]$,
		\item $L:[0,1]\to \f R$ is a non-negative function.
	\end{itemize}
	Suppose that
	\[
	f_p(x) := \frac{k(p/L(x))}{L(x)}
	\]
	is Riemann-Integrable for every $p\in \f Z$. Then,
	$$\serie K\sim_{GLT,\sigma,\lambda} \kappa(x,\theta):=
	\frac{1}{L(x)}\sum_{p\in\f Z}k\left(\frac{p}{L(x)}\right) e^{\textnormal{i} p\theta}=
	\sum_{p\in\f Z}f_p(x)e^{\textnormal{i} p\theta}
	.
	$$
\end{theorem}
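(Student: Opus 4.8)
The plan is to exhibit $\{K_n\}_n$ as an a.c.s.\ limit of truncations and then let the closure properties \textbf{GLT~1}--\textbf{GLT~5} and the a.c.s.\ criterion do the work. Starting from the identity $K_n=\sum_{p\in\f Z}D'_n(f_p)T_n(e^{\mathrm{i}p\theta})$ recorded above, I would set
\[ K_{n,m}:=\sum_{p=-m}^{m}D'_n(f_p)T_n(e^{\mathrm{i}p\theta}),\qquad \kappa_m(x,\theta):=\sum_{p=-m}^{m}f_p(x)e^{\mathrm{i}p\theta}. \]
The first ingredient is that $\{D_n(f_p)-D'_n(f_p)\}_n$ is zero-distributed for every Riemann-integrable $f_p$ (the two diagonals sample $f_p$ on the grids $\{i/n\}_i$ and $\{i/(n-1)\}_i$, which differ pointwise by $O(1/n)$): granting this, \textbf{GLT~3}--\textbf{GLT~4} give $\{D'_n(f_p)\}_n\sim_{\rm GLT}f_p$, hence $\{K_{n,m}\}_n\sim_{\rm GLT}\kappa_m$ for each fixed $m$. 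Since $f_p(x)\neq0$ forces $L(x)\ge|p|$, for every fixed $(x,\theta)$ the series for $\kappa$ is a finite sum and $\kappa_m(x,\theta)=\kappa(x,\theta)$ as soon as $m\ge L(x)$; thus $\kappa_m\to\kappa$ pointwise, a fortiori in measure on $[0,1]\times[-\pi,\pi]$.

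Next I would estimate $\|K_n-K_{n,m}\|_2$. Writing $M:=\|k\|_\infty<\infty$, the $(i,j)$ entry of $K_n-K_{n,m}$ equals $f_{i-j}(x_i)$ when $|i-j|>m$ and vanishes otherwise, while $|f_p(x_i)|\le M/L(x_i)$ and $f_p(x_i)=0$ unless $|p|\le L(x_i)$. Hence for each fixed $i$ the sum $\sum_{|p|>m}|f_p(x_i)|^2$ is empty unless $L(x_i)>m$, in which case it has at most $2L(x_i)$ terms, each $\le M^2/L(x_i)^2$, so $\sum_{|p|>m}|f_p(x_i)|^2\le 2M^2/m$; summing over $i$ gives $\|K_n-K_{n,m}\|_2^2\le 2M^2n/m$. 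Applying Theorem~\ref{res:acs_crit} with Schatten exponent $2$ and $\varepsilon(m,n)=2M^2/m$ yields $\{K_{n,m}\}_n\acs\{K_n\}_n$. Combining this with the previous paragraph, \textbf{GLT~5} gives $\{K_n\}_n\sim_{\rm GLT}\kappa$, and then \textbf{GLT~1} gives $\{K_n\}_n\sim_\sigma\kappa$.

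For the eigenvalue distribution, since $K_n$ is real but not symmetric in general, I would split $K_n=X_n+Y_n$ with $X_n:=\tfrac12(K_n+K_n^{T})$ Hermitian and $Y_n:=\tfrac12(K_n-K_n^{T})$, and check $n^{-1/2}\|Y_n\|_2\to0$; \textbf{GLT~2} then gives $\{K_n\}_n\sim_\lambda\kappa$. Using that $k$ is even, $(K_n-K_n^{T})_{ij}=f_{i-j}(x_i)-f_{i-j}(x_j)$, so $\|K_n-K_n^{T}\|_2^2=\sum_{p}\sum_i|f_p(x_i)-f_p(x_{i-p})|^2$, where $x_i-x_{i-p}=p/(n-1)$. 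The part $|p|>m$ is at most $4\sum_i\sum_{|p|>m}|f_p(x_i)|^2\le 8M^2n/m$ by the estimate above, while for each fixed $p$ with $|p|\le m$ one has $\tfrac1n\sum_i|f_p(x_i)-f_p(x_{i-p})|^2\to0$ --- this is where Riemann-integrability of $f_p$ is genuinely used, via a Darboux-oscillation argument: split $[0,1]$ into $N$ equal pieces, bound the terms whose two arguments lie in the same piece $I_\ell$ by $2\|f_p\|_\infty\,\omega_\ell$ ($\omega_\ell$ the oscillation of $f_p$ on $I_\ell$), bound the $O(|p|N)$ remaining terms by $4\|f_p\|_\infty^2$, let $n\to\infty$ with $N$ fixed and then $N\to\infty$ using $\tfrac1N\sum_\ell\omega_\ell\to0$. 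Hence $\limsup_{n}n^{-1}\|K_n-K_n^{T}\|_2^2\le 8M^2/m$ for every $m$, so it is $0$, $n^{-1/2}\|Y_n\|_2\to0$, and $\{K_n\}_n\sim_\lambda\kappa$. Collecting the three parts, $\{K_n\}_n\sim_{\mathrm{GLT},\sigma,\lambda}\kappa$.

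The two places where real work is hidden --- and where the hypothesis really enters --- are the zero-distributedness of $\{D_n(f_p)-D'_n(f_p)\}_n$ and the grid-shift limit $\tfrac1n\sum_i|f_p(x_i)-f_p(x_{i-p})|^2\to0$; one cannot use uniform continuity, since $f_p$ is only bounded and almost everywhere continuous, and both facts reduce to the same Darboux-type estimate. I would isolate these, together with the needed general statements on spectral symbols, as the lemmata of Appendix~\ref{sec:app}. I expect this Darboux estimate for merely Riemann-integrable symbols, and its interaction with the truncation in $p$ (relevant when $L$ is unbounded), to be the main obstacle; everything else is bookkeeping with \textbf{GLT~1}--\textbf{GLT~5} and Theorem~\ref{res:acs_crit}.
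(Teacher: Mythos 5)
Your proposal is correct. The first half coincides with the paper's: truncate $K_n$ to $K_{n,m}$, identify $\{K_{n,m}\}_n\sim_{\rm GLT}\kappa_m$ via \textbf{GLT~3}--\textbf{4} together with the zero-distributedness of $D_n(f_p)-D'_n(f_p)$ (the paper's Lemma~\ref{res:diag_GLT}, proved by $L^1$-approximation with continuous functions, whereas you would use a Darboux estimate --- both work), then verify the Schatten-$2$ bound $\|K_n-K_{n,m}\|_2^2=O(n/m)$ and invoke Theorem~\ref{res:acs_crit} and \textbf{GLT~5}; your truncation bound $\sum_{|p|>m}|f_p(x)|^2\le 2M^2/m$ and the paper's $\le 2\|k\|_\infty^2\sum_{p>m}p^{-2}$ both rest on the observation that $f_p(x)\neq0$ forces $L(x)\ge|p|$. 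Where you genuinely diverge is the eigenvalue distribution. You bound $\|K_n-K_n^{T}\|_2=o(\sqrt n)$ directly, combining a Darboux-oscillation argument for each fixed $|p|\le m$ with the tail estimate for $|p|>m$, and then apply \textbf{GLT~2} once. The paper instead develops an abstract ``almost-Hermitian sequence'' framework in Appendix~\ref{sec:app} (Theorem~\ref{res:fund_alm_her}, Lemmas~\ref{res:alm_Her_vec_space}, \ref{res:convergence_alm_Her}, \ref{res:GLT_alm_Her}): each truncation $K_{n,m}$ is shown almost-Hermitian through an oscillation argument relying on Lemma~\ref{res:aux_osc}, the almost-Hermitian property is shown stable under a.c.s.-type limits in Frobenius norm, and \textbf{GLT~2} is packaged as Lemma~\ref{res:GLT_alm_Her}. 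The mathematical core --- exploiting that a Riemann-integrable $f_p$ is bounded and a.e.\ continuous (Lebesgue's criterion) to control the anti-Hermitian defect --- is the same in both. Your route is more direct and self-contained; the paper's buys reusable general lemmata at the cost of more scaffolding. You also correctly identify the two places where Riemann-integrability (rather than continuity) is genuinely doing work, which is exactly the content the paper isolates into its appendix.
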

\begin{proof}	
	Observe that the
	matrix $K_n$ can be rewritten as
	\[
	K_n =  \sum_{p\in \f Z}D'_n(f_p)T_n(e^{\textnormal i p\theta}),
	\]
	where
	\begin{equation}
	D'_n(f_p):= \diag([f_p(x_i)]_{i=0,\dots,n-1}).
	\end{equation}
	From Lemma 	\ref{res:diag_GLT} and \textbf{GLT 3,4} we know that
	\[
	K_{n,m} := \sum_{p=-m}^{m} D_n'(f_p)T_n(e^{\textnormal{i} p\theta})\implies
	\{ K_{n,m}\}_n \GLT \kappa_m:=
	\sum_{p=-m}^{m}f_p(x) e^{\textnormal{i} p\theta}.
	\]
	Notice that $\kappa_m\to \kappa$ in measure, since it converges pointwise.
	As a consequence, if we prove that
	\[
	\lim_{m\to \infty} \limsup_{n\to\infty} \frac 1n \| K_n-K_{n,m}\|_2^2 = 0,
	\]
	then Theorem \ref{res:acs_crit} guarantees us that $\{ K_{n,m}\}\acs \serie K$ and \textbf{GLT 5} says that $\serie K\GLT \kappa$.
	Eventually, since $f_j = f_{-j}$ are real valued functions, $K_{n,m}$ can be written as
	\[
	K_{n,m} =
	D_n'(f_0) +
	\sum_{p=1}^{m} \left[D_n'(f_p)T_n(e^{\textnormal{i} p\theta})
	+ D_n'(f_p)^*T_n(e^{\textnormal{i} p\theta})^*\right].
	\]
	so	we can use, in order,
	Theorem   \ref{res:fund_alm_her},
	Lemma \ref{res:convergence_alm_Her}
	and Lemma \ref{res:GLT_alm_Her} to conclude that
	$$\serie K\sim_{GLT,\sigma,\lambda} \kappa(x,\theta).$$
	
	$ $\\
	Let us then estimate $\| K_n-K_{n,m}\|_2^2$. Notice that if $p\ne 0$, then $ k(p/L(x))\ne 0 \implies L(x)\ge p$, so
	\begin{align*}
	\| K_n-K_{n,m}\|_2^2 &=
	\sum_{i,j} ( f_{i-j}(x_{i-1}) - f_{i-j} (x_{i-1})\chi_{|i-j|\le m}    )^2
	\\
	&=
	\sum_{i,j}  \frac{k((i-j)/L(x_{i-1}))^2}{L(x_{i-1})^2}\chi_{|i-j|> m}   \\
		&\le
		\|k\|_\infty^2
	\sum_{i,j}  \frac{\chi_{|i-j|> m}}{(i-j)^2}   \\
	&\le
	2\|k\|_\infty^2 n
	\sum_{p= m+1}^{\infty }  \frac 1{p^2}
	\end{align*}
As a consequence, we conclude that
\begin{align*}
\lim_{m\to \infty} \limsup_{n\to\infty} \frac 1n\| K_n-K_{n,m}\|_2^2 &\le
\lim_{m\to \infty} \limsup_{n\to\infty}
2\|k\|_\infty^2
\sum_{p= m+1}^{\infty }  \frac 1{p^2} =
2\|k\|_\infty^2 \lim_{m\to \infty}\sum_{p= m+1}^{\infty }  \frac 1{p^2} =0.
\end{align*}
\end{proof}

 With an analogous proof one can see that conjecture is true even if  $f_p$ are just continuous a.e.

\section{Conjecture 2}\label{sec:Conj2}

The statement of Conjecture 2 in itself is ambiguous, since in the ALIF method it has never been specified a way to choose the length function $\ell(x)$ that is needed to build the filter and the iteration matrix $K_n$. This step is fundamental for the convergence of the method, and it is easy to build examples where a poor choice of $\ell(x)$ lead to an infinite loop, for almost any initial input $\bm r$ (in particular, for any $\bm r$ with more than two extrema).

\begin{example}\label{ex:counterexample_2}
If we require that $k(0) = 1$, $k(\pm 1) = 0$ (for example, $k(x) = \chi_{[-1/2,1/2]}$ or $k(x) = 1-|x|$), and take $L(x) \equiv 1$, then it is evident that $K_n=I_n$ and $\kappa(x,\theta) \equiv 1$, so the conditions of Conjecture 5.4 are met.
In this case, the Discrete ALIF iteration yields
$\bm g_2 = (I_n-K_n)\bm r = 0$ for every initial signal $\bm r$, so it can't converge, since the number of extrema of $\bm r$ never changes.
\end{example}

The example shows that the knowledge of $\kappa(x,\theta)$ is not enough to conclude whether the method converges. Nonetheless, as shown in Theorem \ref{teo},  it  provides some information on the convergence of the inner loop, since it begets an approximation of the eigenvalues of $K_n$. In fact, we can observe that in Example \ref{ex:counterexample_2}, the inner loop always converges. As a consequence, we can reinterpret the Conjecture as follows:

\begingroup
\renewcommand\theconjecture{2'}
\begin{conjecture}\label{conj:ALIF_2'}
	Assuming the hypotheses of Lemma \ref{res:known}, and \eqref{eq:necessary_condition}, the inner loop of Discrete ALIF converges.
\end{conjecture}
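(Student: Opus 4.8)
The inner loop of the Discrete ALIF iterates $\mathbf g_{m+1}=(I_n-K_n)\mathbf g_m$, so, for a fixed and large $n$, the assertion is precisely that the matrix power $(I_n-K_n)^m$ converges as $m\to\infty$. By elementary linear algebra this holds if and only if every eigenvalue $\mu$ of $K_n$ satisfies $|1-\mu|\le1$, the only such $\mu$ with $|1-\mu|=1$ is $\mu=0$, and the eigenvalue $\mu=0$ (when present) is semisimple. Lemma~\ref{res:known} already gives $\{K_n\}_n\sim_{\sigma,\lambda}\kappa$, and \eqref{eq:necessary_condition} gives $0\le\kappa\le2$, so the bulk of $\eig(K_n)$ is asymptotically confined to $[0,2]=\{\mu\in\mathbb{R}:|1-\mu|\le1\}$; but this is only a distributional statement, insensitive to $o(n)$ outliers and to Jordan structure, which are exactly what the conjecture hinges on. The plan is therefore to upgrade the distributional information to genuine spectral control and to handle the eigenvalues lying on the boundary circle.

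The first step is to exploit that $K_n$ is \emph{almost Hermitian} in a strong, non-asymptotic way. Write $K_n=X_n+Y_n$ with $X_n=\tfrac12(K_n+K_n^*)$ Hermitian and $Y_n=\tfrac12(K_n-K_n^*)$, whose entries are $\tfrac12\bigl(f_{i-j}(x_i)-f_{i-j}(x_j)\bigr)$. Since $k$ is supported in $[-1,1]$, the matrix $K_n$ has bandwidth $O(\sup L)=O(1)$ and only finitely many $f_p$ are not identically zero, so: in the continuous case $\|Y_n\|\to0$ (by the common modulus of continuity of the $f_p$'s on a bounded band), whereas in the step-function case $Y_n$ is supported on the $O(1)$ rows adjacent to the jumps of $L$, hence $\rank(Y_n)=O(1)$. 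The second step is to pin down $X_n$: by \textbf{GLT\,4} and \textbf{GLT\,1} it is a Hermitian GLT sequence with symbol $\kappa$, so $\{X_n\}_n\sim_\lambda\kappa$, but one needs the sharper statement that \emph{every} eigenvalue of $X_n$ lies essentially in $[0,2]$. This I would get by localization: near a grid point $x_i$, $X_n$ is a controlled perturbation of the real, even-symbol Toeplitz matrix generated by $\kappa(x_i,\cdot)$, whose eigenvalues lie strictly between $\operatorname{ess\,inf}_\theta\kappa(x_i,\theta)\ge0$ and $\operatorname{ess\,sup}_\theta\kappa(x_i,\theta)\le2$; gluing these local bounds — through a frequency--space partition of unity, or directly by estimating the field of values $\langle K_n v,v\rangle$ via the Fourier representation $K_n=\sum_pD'_n(f_p)T_n(e^{\mathrm{i}p\theta})$ and commuting the slowly varying multipliers $\kappa_{L(x_i)}$ — should give $\eig(X_n)\subset[0,2]$ up to an $o(1)$ slack.

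It remains to transfer this to $K_n$ and to close the gap on the boundary circle, and this is where the difficulty concentrates. Bauer--Fike only locates $\eig(K_n)$ within $\|Y_n\|$ of $\eig(X_n)$, while $\eig(X_n)$ generically approaches $0$ and $2$ at a rate one cannot a priori control against $\|Y_n\|$: in the step-function case $\|Y_n\|=O(1)$ (only the \emph{rank} of $Y_n$ is $O(1)$), so $Y_n$ may push $O(1)$ genuine outlier eigenvalues of $K_n$ anywhere in $\mathbb{C}$, and even in the continuous case a quantitative race between the spectral gap of $X_n$ and $\|Y_n\|\to0$ is required. A single eigenvalue of $K_n$ escaping $\{|1-\mu|\le1\}$ or landing on its boundary away from $\mu=0$, or a nontrivial Jordan block forming at $\mu=0$, makes the inner loop diverge, and GLT/distribution theory is by design blind to all of these. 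Closing this gap seems to require a genuinely structural analysis of $K_n$ itself — using that its entries are non-negative, that its row sums are uniformly bounded with the boundary rows strictly smaller (homogeneous Dirichlet conditions), and that $K_n$ is sparse and banded — to pin the spectrum exactly; this is where I expect the heart of the proof to lie. If it succeeds, one concludes for $n$ large that $\rho(I_n-K_n)\le1$ with peripheral spectrum reduced to $\{1\}$ and semisimple there, so that the inner loop produces in the limit the projection of $\mathbf r$ onto $\ker K_n$ (the possibly zero limiting IMF). It is also quite possible that, as literally stated with $0\le\kappa\le2$, Conjecture~\ref{conj:ALIF_2'} is false — with a counterexample in the spirit of Example~\ref{ex:counterexample_2} — and that the honest statement requires the strict inequality $0<\kappa<2$ or an explicit symmetrizability hypothesis on $K_n$.
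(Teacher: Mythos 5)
Your plan is aimed in the wrong direction: the paper does not prove Conjecture~\ref{conj:ALIF_2'} at all, it \emph{disproves} it. You anticipated this possibility in your closing sentence, which is the one correct instinct in the proposal, but you did not act on it, and the whole body of the argument (symmetrize $K_n$, control the field of values, fight the outliers and the Jordan structure) is therefore effort spent trying to establish something false. Concretely, the obstacle you correctly flagged — that the GLT/distributional framework is blind to $o(n)$ outliers and cannot by itself keep $\eig(K_n)$ inside the disc $|1-\mu|\le1$ — is not a technical gap to be filled but the actual mechanism of failure.

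The paper's argument is a direct $3\times 3$ counterexample. It exhibits the matrix
\[
K_3=\begin{pmatrix}
0.7 & 0.48 & 0.15\\
0.34 & 0.38 & 0.34\\
0.24 & 0.41 & 0.49
\end{pmatrix},
\]
which has negative determinant $-0.00081$, hence a negative eigenvalue $\lambda\approx -0.0018$ and $\rho(I_3-K_3)\approx 1.0018>1$, so the inner iteration $\mathbf g_{m+1}=(I_3-K_3)\mathbf g_m$ diverges for generic $\mathbf r$. Each row of $K_3$ is then read off as the Fourier coefficients of a trigonometric polynomial $f_{i+1}(\theta)$, and the paper verifies by explicit algebraic factorizations (substituting $y=\cos\theta$) that $0\le f_{i+1}(\theta)\le 2$ for all $\theta$, so that condition \eqref{eq:necessary_condition} holds pointwise. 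Choosing $L$ as a step function taking three prescribed values and building a piecewise-linear even filter $k$ through a finite list of prescribed sample values reproduces exactly this $K_3$, so the hypotheses of Lemma~\ref{res:known} are met as well. Finally, normalizing $k'=k/\|k\|_1$ yields $K'_3=K_3/\|k\|_1$, still with a negative eigenvalue, and now with $0\le\kappa'(x,\theta)\le 2/\|k\|_1<2$, so even a strict upper bound $\kappa<2$ — which you floated as a possible rescue — does not save the statement.

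In short: your proposal correctly diagnoses that the boundary of the disc and the outliers are where the distributional information gives out, but it stops there, whereas the resolution is not to close that gap but to exploit it — a single negative eigenvalue in a tiny explicit example already kills the conjecture. If you want to pursue the positive direction, the counterexample shows you would need hypotheses that constrain $K_n$ itself (for instance symmetrizability, or stochasticity of the rows), not merely bounds on the symbol $\kappa$.
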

\endgroup


%
%

Sadly, it is possible to build a counterexample where $\rho(I_n-K_n)>1$, that leads to a diverging inner loop.
Consider the matrix
\begin{equation}\label{eq:Kn}
	K_n =
	\begin{pmatrix}
	0.7 &   0.48   & 0.15\\
	0.34 &   0.38  &  0.34\\
	0.24  &  0.41 &   0.49
	\end{pmatrix},
\end{equation}
that has  negative determinant $-0.00081$, and thus it has a negative eigenvalue $\lambda \sim -0.0018$ and $\rho(I_n-K_n) \sim 1.0018>1$. Every row can be seen as the coefficients of a nonnegative trigonometric polynomial, bounded by $2$. In particular,
\begin{itemize}
	\item $f_1(\theta) = 0.7  +	0.96 \cos(\theta) +	0.3 \cos(2\theta)$
	\item 								
	$f_2(\theta) =0.38 +0.68 \cos(\theta) +	0.496\cos(2\theta) +	0.288\cos(3\theta)+ 	0.124\cos(4\theta)
	+ 	0.032\cos(5\theta)	$
	\item 				
	$f_3(\theta) = 0.49 + 0.82 \cos(\theta) +	 0.48\cos(2\theta) + 0.18\cos(3\theta)+ 	0.03\cos(4\theta)$
\end{itemize}
In fact, the maximum of each function is attained at $\theta=0$, where
\[
f_1(0)=1.96,\qquad f_2(0)=2,\qquad f_3(0)=2.
\]
Moreover, if we substitute $y=\cos(\theta)$, then
\begin{align*}
	f_1(\theta) &= 0.7  +	0.96 \cos(\theta) +	0.3 \cos(2\theta)\\
	&= ( 225y^2 + 360y + 150)
	/375\\
	&= (15y + 12)^2/375 + 6/375\ge   6/375>0,\\
	f_2(\theta) &= 0.38 +0.68 \cos(\theta) +	0.496\cos(2\theta) +	0.288\cos(3\theta)+ 	0.124\cos(4\theta)
	+ 	0.032\cos(5\theta)\\
	&= (64y^5 + 124y^4 + 64y^3 - 3y + 1)/125\\
	&= (y+1)(64y^4 + 60y^3 +4y^2 -4y +1 )/125\\
	&= (y+1)(
	(32y^2 + 15y-3)^2 +
	(31y^2 + 26y + 7) )/2000 \ge 0,\\
	f_3(\theta) &= 0.49 + 0.82 \cos(\theta) +	 0.48\cos(2\theta) + 0.18\cos(3\theta)+ 	0.03\cos(4\theta)\\
	&=
	(6y^4 + 18y^3 + 18y^2 + 7y + 1)/25\\
	&=(y+1)
	( (y + 1/3)^2(6y+8) + 1/9)/25\ge 0,
\end{align*}
where $31y^2 + 26y + 7 > 0$ for every $y$, and
$-1\le y\le 1$ implies that $y+1\ge 0$ and $6y+8\ge 2>0$. \\

We want to find $k(x)$ and $L(x)$ that induce the matrix $K_n$. Recall that
 $k(x)$ must be an even, non-negative, bounded, measurable function with $\|k\|_1 =1$, and  compactly supported on $[-1,1]$ and the function $L(x)$ must be  strictly positive on $[0,1]$.

 In this case, $n=3$ and  $x_0 =0$, $x_1 = 1/2$, $x_2=1$, so if  we impose $L(x)$ to be a step function that takes only three values $L(x_0), L(x_1), L(x_2)$, then for every $x\in[0,1]$,  the symbol
 \[
 \kappa(x,\theta)
 := \frac{1}{L(x)}\sum_{j\in\f Z}k\left(\frac{j}{L(x)}\right) e^{\textnormal{i} j\theta}.
 \]
 is equal to $\kappa(x_i,\theta)$ for some $i=0,1,2$.  Notice that, from
the definition of $K_n$
	\[
K_n = \left[ \frac{k((i-j)/L(x_i))}{L(x_i)}  \right]_{i,j=0}^{n-1},
\]
if we impose
\begin{equation}\label{eq:symbol_building}
	\kappa(x_i,\theta) = f_{i+1}(\theta), \qquad i=0,1,2,
\end{equation}
then automatically
\[
\kappa(x,\theta) = \kappa(x_i,\theta) = f_{i+1}(\theta)\implies
0\le \kappa(x,\theta) \le 2, \quad  (x,\theta) \in [0,1]\times[-\pi,\pi]
\]
and $K_n$ takes the form in \eqref{eq:Kn}. Moreover, since $L(x)$ is a step function, the hypotheses of Conjecture \ref{conj:ALIF_2'} hold.
From \eqref{eq:symbol_building}, we need to equate the Fourier coefficients in $\theta$, and since $k$ needs to be an even function, it is sufficient to impose
\[
\frac{1}{L(x_i)}k\left(\frac{j}{L(x_i)}\right) = (f_{i+1})_{j}
, \qquad i=0,1,2,\qquad j\in \mathbb N,
\]
where $(f_{i+1})_{j}$ is the $j$-th Fourier coefficient  of $f_{i+1}$.
Taking the values
\[
L(0) = 3, \quad
L(1/2) = \frac {105}{19}, \quad
L(1) = \frac {30}{7}
\]
and $k(0) = 21/10$,  we have
\begin{align*}
\frac{1}{L(0)}k\left(\frac{j}{L(0)}\right) &= (f_{1})_{j} = 0, \qquad j>2,\\
\frac{1}{L(1/2)}k\left(\frac{j}{L(1/2)}\right) &= (f_{2})_{j} = 0, \qquad j> 5,\\
\frac{1}{L(1)}k\left(\frac{j}{L(1)}\right) &= (f_{3})_{j} = 0, \qquad j>4,
\end{align*}
since $k$ has support on $(-1,1)$, and
\[
k(0)/L(0) = (f_1)_0 = 7/10,\qquad
k(0)/L(1/2) = (f_2)_0 = 19/50,\qquad
k(0)/L(1) = (f_3)_0 = 49/100.
\]
The remaining conditions are reported in the following table:
\begin{table}[h!]
	\centering
\small{
	\begin{tabular}{ |c||c|c|c|c|c|c|c|c|c|c|c| }
		\hline
		$x$ &  19/105& 7/30& 1/3& 38/105& 7/15& 19/35& 2/3& 7/10& 76/105& 19/21& 14/15\\
		\hline
		$k(x)$ & 	357/190& 123/70& 36/25& 651/475& 36/35& 378/475& 9/20& 27/70& 651/1900& 42/475& 9/140		\\
		\hline
	\end{tabular}
}
	\caption{Conditions on the filter $k(x)$.}
	\label{tab:t1}
\end{table}

\begin{figure}[h]
	\makebox[\textwidth][c]{\includegraphics[width=\textwidth]{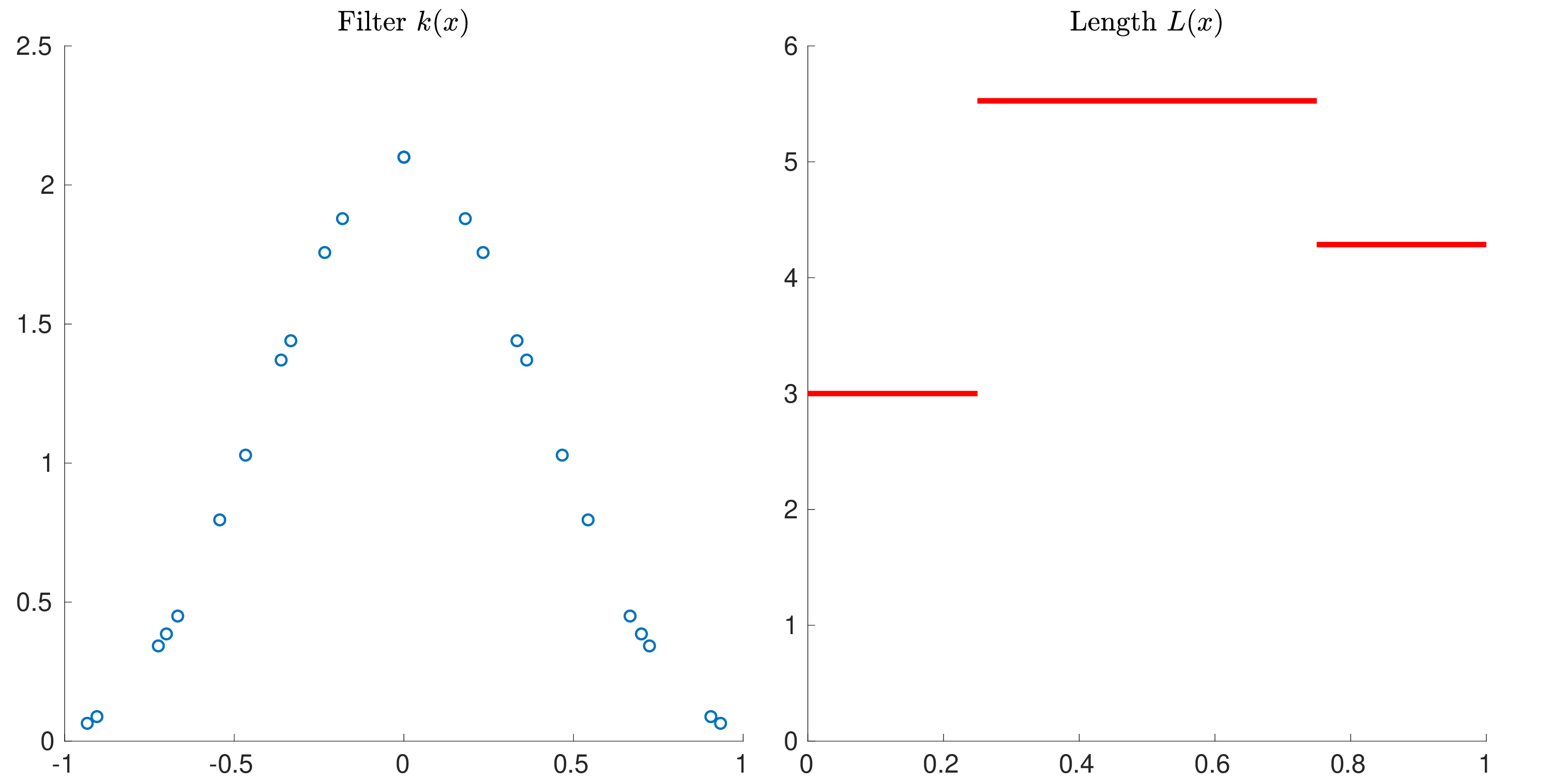}}%
	\caption{On the left, the conditions on the filter $k(x)$. On the right, the step-function $L(x)$.}
	\label{fig:f1}
\end{figure}

Since all the points where $k$ is evaluated are distinct, we can find an  even, non-negative, bounded and continuous measurable function $k$ supported on $(-1,1)$, that respects all conditions. The most simple example is a piecewise linear function connecting all the conditions shown in Figure \ref{fig:f1}. In this case, $\|k\|_1>1$, but notice that    the filter $k'(x) := k(x)/\|k\|_1$ and the same length function $L(x)$ produce the matrix $K'_n = K_n/\|k\|_1$, that is still a counterexample to Conjecture \ref{conj:ALIF_2'}, since it has a negative eigenvalue, and $0\le \kappa'(x,\theta) = \kappa(x,\theta) /\|k\|_1\le 2/\|k\|_1<2$.

\section{Conclusions}

In this work we tackle the open problems and conjectures left unsolved in \cite{cicone2019spectral}, which regard the convergence of the ALIF method.

In particular, we first review basic and fundamental properties of sequences of matrices, with particular emphasis on the GLT sequences, approximating classes of sequences of matrices and their spectral properties. Then we recall the ALIF method, its known properties and the two conjectures proposed in \cite{cicone2019spectral}.
In Theorem \ref{teo} we prove that Conjecture \ref{conj:ALIF_1} actually holds true. To achieve this result, we rely on several new technical lemmata on approximating classes of sequences
convergence of certain matrix sequences, namely the Almost-Hermitian Sequences and Almost-Hermitian GLT Sequences, and some new results on spectral symbols.
On the other hand, we show by counterexample that Conjecture \ref{conj:ALIF_2} cannot hold as it is. In particular we are able to show that the current formulation of the conjecture is too loose. We propose, then, a tighter formulation as Conjecture \ref{conj:ALIF_2'}. However, even in this case we are able to find a counterexample to the statement.

It remains an open problem if the ALIF algorithm can be proved to be convergent at all. We plan to study this problem in a future work.

\section*{Acknowledgements}

Antonio Cicone is a member of the Italian ``Gruppo Nazionale di Calcolo Scientifico'' (GNCS) of the Istituto Nazionale di Alta Matematica ``Francesco Severi'' (INdAM). He thanks the Italian Space Agency for the financial support under the contract ASI  "LIMADOU scienza" n$^{\circ}$ 2016-16-H0.

\newpage

\newpage
\appendix
\section{Appendix: Technical results}\label{sec:app}
In this appendix, we provide the full details on some technical steps that are necessary for our analysis. For convenience, we have split the appendix into various subsections, according to the specific nature of the results contained therein.
Note that in the following, if $A$ is any subset of $[0,1]$, then $A^C$ is its complement set in $[0,1]$.

\subsection{Auxiliary Result}

\begin{lemma}\label{res:aux_osc}
	Let $a:[0,1] \to \f C$ be a bounded function and call $\alpha$-oscillation of $a$ the function
	\[
	\omega_\alpha(x) := \sup_{z\in B_\alpha(x) \cap [0,1]} |a(z)|  -\inf_{z\in B_\alpha(x) \cap [0,1]} |a(z)|,
	\]
	where $B_\alpha(x)$ is the open ball with centre $x$ and radius $\alpha$.
	If $a$ is continuous a.e., then also $\omega_\alpha$ is continuous a.e.
\end{lemma}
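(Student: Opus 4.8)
The plan is to show that $\omega_\alpha$ is continuous at every point at which $a$ is continuous; since the set of discontinuity points of $a$ has measure zero by hypothesis, this yields the claim. So fix $x_0\in[0,1]$ at which $a$ is continuous, and fix $\varepsilon>0$. The key observation is that the function $x\mapsto\sup_{z\in B_\alpha(x)\cap[0,1]}|a(z)|$ is itself lower semicontinuous wherever $|a|$ is ``accessible'' from inside the ball, but semicontinuity alone is not enough — we need genuine continuity, and for that the continuity of $a$ at $x_0$ will be what saves the argument at the critical value $z=x_0$. The subtlety is that nearby points $x$ have balls $B_\alpha(x)$ that differ from $B_\alpha(x_0)$ only by a thin shell near the two endpoints $x_0\pm\alpha$ (intersected with $[0,1]$), so the suprema and infima can only change through what happens on that shell.

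First I would estimate the oscillation of the \emph{sup part}: write $M(x):=\sup_{z\in B_\alpha(x)\cap[0,1]}|a(z)|$. For $|x-x_0|$ small, $B_\alpha(x)\cap[0,1]$ and $B_\alpha(x_0)\cap[0,1]$ agree outside two intervals of length $|x-x_0|$ located near $x_0\pm\alpha$. Using boundedness of $a$, say $|a|\le C$, one gets $|M(x)-M(x_0)|\le$ (the variation of $\sup|a|$ over those two shells). The point is that $M(x)$ can jump only if the supremum over $B_\alpha(x_0)$ was ``almost attained'' near an endpoint $x_0\pm\alpha$; to control this uniformly I would argue by cases on whether the near-maximizing points of $B_\alpha(x_0)$ stay in the common region or not. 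A clean way: $M(x)\ge \sup$ over the common region, which is $\ge M(x_0)-\eta$ once the shells are thin enough relative to how much the sup over the small shell near $x_0+\alpha$ or $x_0-\alpha$ could exceed the rest — here one uses that $a$ need not be continuous at $x_0\pm\alpha$, so the honest bound is $|M(x)-M(x_0)|\le \max\big(\text{osc of }|a|\text{ on }[x_0+\alpha-\delta,x_0+\alpha+\delta],\ \text{osc on }[x_0-\alpha-\delta,x_0-\alpha+\delta]\big)$ plus a term forced by $z=x_0$ itself sliding in or out, which is harmless by continuity of $a$ at $x_0$. Crucially, we do \emph{not} claim $M$ is continuous everywhere — only at $x_0$, and the above shows $M$ is continuous at $x_0$ provided $|a|$ has small oscillation on small neighborhoods of $x_0\pm\alpha$. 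That last proviso fails only when $x_0\pm\alpha$ is itself a discontinuity point of $a$; since the discontinuity set $E$ of $a$ is null, the set $\{x_0 : x_0+\alpha\in E\text{ or }x_0-\alpha\in E\}$ is a translate of (a piece of) $E$, hence also null.

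So the honest statement the argument produces is: $\omega_\alpha$ is continuous at every $x_0$ such that $a$ is continuous at $x_0$, at $x_0+\alpha$, and at $x_0-\alpha$. The set of bad $x_0$ is contained in $E\cup(E-\alpha)\cup(E+\alpha)$, a finite union of null sets, hence null. The infimum part $m(x):=\inf_{z\in B_\alpha(x)\cap[0,1]}|a(z)|$ is handled identically (or by applying the sup argument to $-|a|+C$), and $\omega_\alpha=M-m$ is then continuous off a null set. I would finish by writing $\omega_\alpha$ as this difference and invoking that a finite difference of functions each continuous a.e.\ is continuous a.e. The main obstacle is the bookkeeping of the first paragraph: making precise that "the balls differ only in a thin shell near the endpoints" and converting that into the quantitative bound $|M(x)-M(x_0)|\le \sup_{|y-x_0|\le\alpha+|x-x_0|,\ |y-x_0|\ge\alpha-|x-x_0|}|a(y)| - (\text{something})$ without sign errors; once that estimate is in hand, the reduction to "$a$ continuous at $x_0,x_0\pm\alpha$" and the null-set argument are routine.
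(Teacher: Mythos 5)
Your proposal is essentially the same as the paper's proof: the bad set is $\{x : x-\alpha \in E \text{ or } x+\alpha\in E\}$ (where $E$ is the null discontinuity set of $a$), and off this set one uses continuity of $a$ at $x\pm\alpha$ together with the observation that $B_\alpha(x)$ and $B_\alpha(x_0)$ differ only in thin shells near the endpoints. One small inaccuracy worth noting: you also require $a$ to be continuous at $x_0$ itself and speak of ``$z=x_0$ sliding in or out,'' but for $|x-x_0|$ small the point $x_0$ stays strictly in the interior of $B_\alpha(x)$, so continuity of $a$ at $x_0$ is not needed and that term does not arise; the paper's bad set is accordingly just $(E+\alpha)\cup(E-\alpha)$, without $E$, though since $E$ is null your slightly larger bad set still gives the stated conclusion.
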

\begin{proof}
	Let $E$ be the set of discontinuity points for $a$, and define
	\[
	Z:= \set{ x\in [0,1] | x-\alpha\in E \text{ or } x+\alpha\in E }.
	\]
	Note that the measure of $Z$ is at most two times the measure of $E$, so it is zero.
	Let now $x\in Z^C$ and $a(x) = b$. We know that both $x-\alpha$ and $x+\alpha$ (when they are inside $[0,1]$) are continuity points for $a$, so given
	any $\ve >0$,
	there exists $\delta>0$ such that
	\[
	|a(x-\alpha) - a(x-\alpha + y)| \le \ve, \quad
	|a(x+\alpha) - a(x+\alpha + y)|\le \ve\qquad \forall |y|<\delta.
	\]
	As a consequence, for every $0<y<\delta$ the following holds
	\begin{align*}
	\sup_{z\in B_\alpha(x+y) \cap [0,1]} |a(z)|
	&=
	\max\left\{
	\sup_{z\in (x+y-\alpha,x+\alpha) \cap [0,1]} |a(z)| ,
	\sup_{z\in [x+\alpha,x+\alpha+y) \cap [0,1]} |a(z)|
	\right\}\\
	\implies
	\sup_{z\in B_\alpha(x+y) \cap [0,1]} |a(z)|
	&\le
	\max\left\{
	\sup_{z\in B_\alpha(x) \cap [0,1]} |a(z)| ,
	|a(x+\alpha)| +\ve
	\right\}
	\le \sup_{z\in B_\alpha(x) \cap [0,1]} |a(z)| +\ve, \\
	\sup_{z\in B_\alpha(x+y) \cap [0,1]} |a(z)|
	&\ge
	\sup_{z\in (x+y-\alpha,x+\alpha) \cap [0,1]} |a(z)|
	\ge
	\sup_{z\in B_\alpha(x) \cap [0,1]} |a(z)| -\ve ,
	\\
	\inf_{z\in B_\alpha(x+y) \cap [0,1]} |a(z)|
	&=
	\min\left\{
	\inf_{z\in (x+y-\alpha,x+\alpha) \cap [0,1]} |a(z)| ,
	\inf_{z\in [x+\alpha,x+\alpha+y) \cap [0,1]} |a(z)|
	\right\}\\
	\implies
	\inf_{z\in B_\alpha(x+y) \cap [0,1]} |a(z)|
	&\ge
	\min\left\{
	\inf_{z\in B_\alpha(x) \cap [0,1]} |a(z)| ,
	|a(x+\alpha)| -\ve
	\right\}
	\ge \inf_{z\in B_\alpha(x) \cap [0,1]} |a(z)| -\ve,\\
	\inf_{z\in B_\alpha(x+y) \cap [0,1]} |a(z)|
	&\le
	\inf_{z\in (x+y-\alpha,x+\alpha) \cap [0,1]} |a(z)|\le   \inf_{z\in B_\alpha(x) \cap [0,1]} |a(z)| +\ve.
	\end{align*}
	\begin{equation}\label{supinf}
	\left| 	\sup_{z\in B_\alpha(x+y) \cap [0,1]} |a(z)| - \sup_{z\in B_\alpha(x) \cap [0,1]} |a(z)|\right|\le \ve, \qquad
	\left| 	\inf_{z\in B_\alpha(x+y) \cap [0,1]} |a(z)| - \inf_{z\in B_\alpha(x) \cap [0,1]} |a(z)|\right|\le \ve.
	\end{equation}
	With an analogous argument we can show that \eqref{supinf} holds also for $-\delta<y<0$ and even if $x+\alpha$ or $x-\alpha$ are not inside $[0,1]$, so
	\[
	|\omega_\alpha(x) -\omega_\alpha(x+y)|
	\le
	\left| \sup_{z\in B_\alpha(x) \cap [0,1]} |a(z)|
	-\sup_{z\in B_\alpha(x+y) \cap [0,1]} |a(z)|
	\right|
	+
	\left|
	\inf_{z\in B_\alpha(x+y) \cap [0,1]} |a(z)|
	-\inf_{z\in B_\alpha(x) \cap [0,1]} |a(z)|
	\right|\le 2\ve.
	\]
	This is enough to prove that  $\omega_\alpha$ is continuous at every point of $Z^C$, meaning it is continuous a.e.
\end{proof}

\begin{lemma}\label{res:diag_GLT}
	Let $x_i=\frac i{n-1}$ and
	$$D'_n(a):= \diag([a(x_i)]_{i=0,\dots,n-1})$$
	for any Riemann-Integrable function $a:[0,1]\to \f C$. Then
	\[
	\{ D'_n(a)  \}_n \GLT a(x).
	\]
\end{lemma}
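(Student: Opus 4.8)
The plan is to compare $D'_n(a)$ with the standard diagonal sampling matrix $D_n(a)=\diag_{i=1,\dots,n}a(i/n)$ from Section~\ref{sub:diag}, for which \textbf{GLT 3} already gives $\{D_n(a)\}_n\sim_{GLT}a(x)$. Writing $D'_n(a)=D_n(a)-Z_n$ with $Z_n:=D_n(a)-D'_n(a)$, it suffices by \textbf{GLT 3} (third bullet) and \textbf{GLT 4} (closure under linear combinations) to show that $\{Z_n\}_n$ is zero-distributed; then $\{Z_n\}_n\sim_{GLT}0$ and hence $\{D'_n(a)\}_n=\{D_n(a)-Z_n\}_n\sim_{GLT}a(x)-0=a(x)$.

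Since each $Z_n$ is diagonal, relabelling $j=i+1$ its $j$-th diagonal entry ($j=1,\dots,n$) is $a(j/n)-a((j-1)/(n-1))$, and its singular values are the moduli of these entries. I would estimate the trace norm
\[
\frac1n\|Z_n\|_1=\frac1n\sum_{j=1}^{n}\left|a\Bigl(\frac jn\Bigr)-a\Bigl(\frac{j-1}{n-1}\Bigr)\right|=\int_0^1\phi_n(t)\,\diff t,\qquad \phi_n(t):=\left|a\Bigl(\frac{\lceil tn\rceil}{n}\Bigr)-a\Bigl(\frac{\lceil tn\rceil-1}{n-1}\Bigr)\right|,
\]
where $\phi_n$ is the step function taking the value of the $j$-th summand on $((j-1)/n,j/n]$. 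For every fixed $t\in(0,1)$ both grid points $\lceil tn\rceil/n$ and $(\lceil tn\rceil-1)/(n-1)$ converge to $t$ as $n\to\infty$; hence at every continuity point $t$ of $a$ one has $\phi_n(t)\to 0$. Because $a$ is Riemann-integrable it is bounded and continuous almost everywhere (Lebesgue's criterion), so $\phi_n\to 0$ a.e.\ and $0\le\phi_n\le 2\|a\|_\infty$ uniformly. By the bounded convergence theorem, $\tfrac1n\|Z_n\|_1\to 0$, and the second bullet of Proposition~\ref{res:zero-dis} (with $p=1$) yields that $\{Z_n\}_n$ is zero-distributed, completing the argument.

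The only genuinely delicate point is the a.e.\ convergence $\phi_n\to 0$: one must check that the two distinct grids $i/n$ and $i/(n-1)$, though different, both approach any prescribed continuity point $t$ of $a$, and that the exceptional set — the discontinuity set of $a$ together with $\{0,1\}$ — has measure zero. The remaining ingredients (the reduction to $Z_n$, the trace-norm criterion for zero-distributed sequences, and the applications of \textbf{GLT 3,4}) are routine.
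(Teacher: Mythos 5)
Your proof is correct, but it follows a genuinely different route from the paper's. You prove directly, for every Riemann-integrable $a$, that the difference $Z_n=D_n(a)-D'_n(a)$ is zero-distributed, using the trace-norm criterion from Proposition~\ref{res:zero-dis} with $p=1$: you identify $n^{-1}\|Z_n\|_1$ with the integral of a step function $\phi_n$, observe $\phi_n\to 0$ pointwise at continuity points of $a$ (hence a.e.\ by Lebesgue's criterion), and conclude by bounded convergence; then \textbf{GLT~3} and \textbf{GLT~4} finish. The paper instead argues in two stages: for continuous $a$ it estimates the spectral norm via the modulus of continuity, showing $\|D'_n(a)-D_n(a)\|\to 0$, which is a stronger (uniform) statement; for general Riemann-integrable $a$ it approximates $a$ in $L^1$ by continuous $a_m$, proves $\{D'_n(a_m)\}_n\stackrel{\rm a.c.s.}{\longrightarrow}\{D'_n(a)\}_n$ via Theorem~\ref{res:acs_crit}, and then invokes \textbf{GLT~5}. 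Your argument is more elementary — it avoids the a.c.s.\ and \textbf{GLT~5} machinery entirely — and it actually establishes, for all Riemann-integrable $a$, the zero-distribution of $\{D_n(a)-D'_n(a)\}_n$ that the paper only asserts informally before the lemma and only proves in the continuous case. The paper's two-step version buys a sharper intermediate estimate in the continuous case (convergence in spectral norm, not just in averaged trace norm) and illustrates the standard a.c.s.\ workflow used elsewhere in the appendix, but for the purposes of the lemma itself your single-step proof is equally valid and somewhat leaner.
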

\begin{proof}
	First of all, let us prove it in the case $a(x)$ continuous. Notice that
	\[
	\left| \left[ D'_n(a) - D_n(a) \right]_{i,i} \right|
	=
	|a(x_{i-1}) - a(i/n)| \le \omega_a\left(\left|\frac {i-1}{n-1}-\frac in\right|\right) =
	\omega_a\left(\left|\frac {n-i}{n(n-1)}\right|\right)\le
	\omega_a(1/n)
	\]
	where $\omega_a$ is the continuity modulus of $a(x)$. Since $\omega_a(x)\xrightarrow{x\to 0}0$, we obtain that $\| D'_n(a) - D_n(a)\| \xrightarrow{n\to\infty}0$ and in particular, from Proposition \ref{res:zero-dis}, we know that $\{D_n(a) - D_n'(a) \}_n$ is zero-distributed. The thesis follows from \textbf{GLT 3} and \textbf{GLT 4}.\\
	
	Suppose now that $a(x)$ is Riemann-Integrable. From the density of the continuous function in $L^1([0,1])$, we know that there exists a
	 sequence of continuous functions $a_m(x)$ converging  in $L^1([0,1])$ to $a(x)$. Notice that $a-a_m$ is Riemann-Integrable for any $m$.
	\begin{align*}
	\lim_{m\to\infty}\limsup_{n\to\infty} \frac 1n\|D_n'(a_m) - D'_n(a)\|_1 &=
	\lim_{m\to\infty}\limsup_{n\to\infty}\frac 1n\sum_{i=0}^{n-1}
	\left| a_m(x_i) - a(x_i)\right|\\
	&=\lim_{m\to\infty} \int_0^1 |a_m(x)-a(x)| \,{{\rm d}}x =0,
	\end{align*}
	and Theorem \ref{res:acs_crit} shows that
	$$
	\{D_n'(a_m) \}_n\acs \{ D_n'(a) \}_n.
	$$
	The thesis follows from \textbf{GLT 5}.
\end{proof}

\subsection{Almost-Hermitian Sequences}

From now on, we say that a sequence $\serie A$ is almost-Hermitian if there exists an Hermitian sequence $\serie{\wt A}$ such that $\|A_n-\wt A_n\|_2=o(\sqrt n)$.

\begin{lemma}\label{res:GLT_alm_Her}
	Suppose $\serie A\GLT k$ is an almost-Hermitian sequence. In this case, $k$ is real valued and
	\[
	\serie A\sim_{GLT,\sigma,\lambda} k.
	\]
\end{lemma}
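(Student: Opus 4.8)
The plan is to exploit \textbf{GLT 2}, which is precisely the tool designed for this situation. Recall that \textbf{GLT 2} says: if $\{A_n\}_n\sim_{\rm GLT}\kappa$ and $A_n=X_n+Y_n$ with every $X_n$ Hermitian and $n^{-1/2}\|Y_n\|_2\to 0$, then $\{A_n\}_n\sim_\lambda\kappa$. By hypothesis $\{A_n\}_n$ is almost-Hermitian, so there is a Hermitian sequence $\{\widetilde A_n\}_n$ with $\|A_n-\widetilde A_n\|_2=o(\sqrt n)$. Setting $X_n:=\widetilde A_n$ and $Y_n:=A_n-\widetilde A_n$ gives exactly the decomposition required by \textbf{GLT 2}, hence $\{A_n\}_n\sim_\lambda\kappa$. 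Combined with \textbf{GLT 1}, which gives $\{A_n\}_n\sim_\sigma\kappa$ from $\{A_n\}_n\sim_{\rm GLT}\kappa$ alone, we obtain $\{A_n\}_n\sim_{GLT,\sigma,\lambda}\kappa$.

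It remains to argue that $\kappa$ is real valued. First I would note that $\{\widetilde A_n\}_n$ itself is a GLT sequence with the same symbol $\kappa$: indeed $\{A_n-\widetilde A_n\}_n$ has $\|A_n-\widetilde A_n\|_2=o(\sqrt n)$, so $n^{-1/2}\|A_n-\widetilde A_n\|_2\to 0$, and by the zero-distributed criterion (Proposition~\ref{res:zero-dis}, second bullet with $p=2$) the sequence $\{A_n-\widetilde A_n\}_n$ is zero-distributed; by \textbf{GLT 3} it is then a GLT sequence with symbol $0$, and by \textbf{GLT 4} (linearity) $\{\widetilde A_n\}_n=\{A_n\}_n-\{A_n-\widetilde A_n\}_n\sim_{\rm GLT}\kappa$. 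Now each $\widetilde A_n$ is Hermitian, so by \textbf{GLT 1} we get $\{\widetilde A_n\}_n\sim_\lambda\kappa$, and since the eigenvalues of Hermitian matrices are real, the distribution relation forces $\kappa$ to be real valued almost everywhere. (Concretely: if $\kappa$ took non-real values on a set of positive measure, choosing $F\in C_c(\mathbb C)$ supported away from $\mathbb R$ would make the right-hand side of the eigenvalue-distribution identity nonzero while the left-hand side is $F$ evaluated only at real points, a contradiction.) One may then replace $\kappa$ by its real part, or simply observe it already equals its real part a.e.

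I do not expect any serious obstacle here; the statement is essentially a repackaging of \textbf{GLT 2} together with the observation that an $o(\sqrt n)$ Frobenius-norm perturbation is zero-distributed. The only point requiring a little care is the bookkeeping that the Hermitian part $\{\widetilde A_n\}_n$ inherits the symbol $\kappa$, which is needed to conclude $\kappa$ is real; this is immediate from combining Proposition~\ref{res:zero-dis}, \textbf{GLT 3} and \textbf{GLT 4} as above. Everything else is a direct citation of the listed GLT properties.
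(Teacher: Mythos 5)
Your proof is correct and uses exactly the same key tool as the paper: \textbf{GLT 2} applied to the decomposition $A_n=\widetilde A_n+(A_n-\widetilde A_n)$, combined with \textbf{GLT 1} for the singular value distribution. The only difference is that you additionally spell out, via Proposition~\ref{res:zero-dis}, \textbf{GLT 3} and \textbf{GLT 4}, why the Hermitian part $\{\widetilde A_n\}_n$ inherits the symbol $k$ and hence why $k$ must be real valued a.e.\ — a detail the paper's one-line proof leaves implicit.
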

\begin{proof}
	Since $A_n = \wt A_n + (A_n - \wt A_n)$ where $\wt A_n$ is Hermitian and $\|A_n-\wt A_n\|_2=o(\sqrt n)$, from
	\textbf{GLT 2} we conclude that $\serie A\sim_\lambda k$.
\end{proof}

\begin{lemma}\label{res:alm_Her_vec_space}
	The set of almost-Hermitian sequences is a real vectorial space.
\end{lemma}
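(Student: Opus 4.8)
The plan is to verify the two vector-space axioms directly from the definition of almost-Hermitian, using the (already noted) fact that the Hermitian matrices form a real vector space and that the Schatten $2$-norm is a norm (in particular it satisfies the triangle inequality and scales). So suppose $\serie A$ and $\serie B$ are almost-Hermitian, witnessed by Hermitian sequences $\serie{\wt A}$ and $\serie{\wt B}$ with $\|A_n-\wt A_n\|_2=o(\sqrt n)$ and $\|B_n-\wt B_n\|_2=o(\sqrt n)$. First I would treat scalar multiplication: for $\alpha\in\f R$, the sequence $\{\alpha \wt A_n\}_n$ is Hermitian (here is the only place where we need $\alpha$ real rather than complex), and $\|\alpha A_n-\alpha\wt A_n\|_2=|\alpha|\,\|A_n-\wt A_n\|_2=o(\sqrt n)$, so $\{\alpha A_n\}_n$ is almost-Hermitian. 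Next, for the sum, $\{\wt A_n+\wt B_n\}_n$ is Hermitian and by the triangle inequality $\|(A_n+B_n)-(\wt A_n+\wt B_n)\|_2\le\|A_n-\wt A_n\|_2+\|B_n-\wt B_n\|_2=o(\sqrt n)+o(\sqrt n)=o(\sqrt n)$, so $\{A_n+B_n\}_n$ is almost-Hermitian. Finally I would note the zero sequence is almost-Hermitian (take $\wt A_n=0$), so the set is nonempty, and all remaining vector-space axioms (associativity, commutativity, distributivity) are inherited pointwise from the matrix algebra.

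I do not expect any real obstacle here — the statement is essentially a bookkeeping check. The only subtlety worth flagging explicitly is that we must restrict scalars to $\f R$: multiplying a Hermitian matrix by a non-real scalar generally destroys Hermitianness, which is why the ambient field in the statement is $\f R$ and not $\f C$. One could also remark that the choice of witnessing Hermitian sequence is not unique, but this does not affect the argument since we only need existence of some such witness for each sequence, and the construction above produces an explicit witness for $\alpha A_n$ and for $A_n+B_n$ from the given ones.

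\begin{proof}
	The zero sequence is almost-Hermitian, so the set is nonempty, and all the vector-space axioms that do not involve the almost-Hermitian condition hold because the $n\times n$ matrices form a vector space for each $n$. It remains to check closure under real scalar multiplication and under addition.

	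Let $\serie A$ be almost-Hermitian, with $\serie{\wt A}$ Hermitian and $\|A_n-\wt A_n\|_2=o(\sqrt n)$, and let $\alpha\in\f R$. Then $\{\alpha\wt A_n\}_n$ is Hermitian (using $\alpha\in\f R$), and
	\[
	\|\alpha A_n-\alpha\wt A_n\|_2=|\alpha|\,\|A_n-\wt A_n\|_2=o(\sqrt n),
	\]
	so $\{\alpha A_n\}_n$ is almost-Hermitian.

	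Let $\serie A,\serie B$ be almost-Hermitian, witnessed by Hermitian sequences $\serie{\wt A},\serie{\wt B}$ with $\|A_n-\wt A_n\|_2=o(\sqrt n)$ and $\|B_n-\wt B_n\|_2=o(\sqrt n)$. Then $\{\wt A_n+\wt B_n\}_n$ is Hermitian, and by the triangle inequality for the Schatten $2$-norm,
	\[
	\|(A_n+B_n)-(\wt A_n+\wt B_n)\|_2\le\|A_n-\wt A_n\|_2+\|B_n-\wt B_n\|_2=o(\sqrt n),
	\]
	so $\{A_n+B_n\}_n$ is almost-Hermitian. Hence the set of almost-Hermitian sequences is a real vector space.
\end{proof}
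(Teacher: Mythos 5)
Your proof is correct and follows essentially the same approach as the paper: verify closure under real scalar multiplication via homogeneity of the Schatten $2$-norm, and closure under addition via the triangle inequality, in each case producing an explicit Hermitian witness from the given ones. The extra remarks about the zero sequence and the remaining axioms being inherited from the matrix algebra are harmless additions that the paper leaves implicit.
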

\begin{proof}
	If $\serie A$ is an almost-Hermitian sequence and $c\in \f R$, then
	\[
	\|cA_n-c\wt A_n\|_2=|c| \|A_n-\wt A_n\|_2=o(\sqrt n).
	\]
	If $\serie B$ is also almost-Hermitian, then
	\[
	\|A_n + B_n -\wt A_n-\wt B_n\|_2 \le \|A_n-\wt A_n\|_2 + \|B_n-\wt B_n\|_2 =o(\sqrt n).
	\]
\end{proof}

\begin{lemma}\label{res:convergence_alm_Her}
	Given a sequence of almost-Hermitian sequences $\{ B_{n,m}  \}_n$   suppose that there exists a sequence $\serie B$ with
	\[
	\lim_{m\to \infty} \limsup_{n\to\infty}\frac 1n \| B_{n,m} - B_n\|_2^2  = 0.
	\]
	In this case, $\serie B$ is almost-Hermitian.
\end{lemma}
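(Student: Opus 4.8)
The plan is to take the most natural Hermitian approximant of $B_n$, namely its Hermitian part $\wt B_n := \tfrac12(B_n + B_n^*)$, and to show $\|B_n - \wt B_n\|_2 = \tfrac12\|B_n - B_n^*\|_2 = o(\sqrt n)$. The conceptual point is that "being almost-Hermitian" is a closed condition with respect to the distance $d(\{A_n\},\{B_n\}) := \limsup_n n^{-1/2}\|A_n-B_n\|_2$, and that the Hermitian part is always a valid witness; concretely, the quantity $\|B_n - B_n^*\|_2$ does not depend on $m$, so once we bound it for every fixed $m$ in terms of the $m$-dependent error $\|B_n-B_{n,m}\|_2$ and the (vanishing, in $n$) almost-Hermitian defect of $\{B_{n,m}\}_n$, we can simply let $m\to\infty$.

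In detail: for each $m$ pick a Hermitian sequence $\{\wt B_{n,m}\}_n$ with $\|B_{n,m} - \wt B_{n,m}\|_2 = o(\sqrt n)$, which exists since $\{B_{n,m}\}_n$ is almost-Hermitian. Setting $X_{n,m} := B_n - B_{n,m}$ and $Y_{n,m} := B_{n,m} - \wt B_{n,m}$, one has $B_n = X_{n,m} + Y_{n,m} + \wt B_{n,m}$, hence $B_n - B_n^* = (X_{n,m} - X_{n,m}^*) + (Y_{n,m} - Y_{n,m}^*)$ because $\wt B_{n,m} = \wt B_{n,m}^*$ cancels. Since the Frobenius norm is unitarily invariant, $\|Z^*\|_2 = \|Z\|_2$, and the triangle inequality yields
\[
\|B_n - B_n^*\|_2 \le 2\|B_n - B_{n,m}\|_2 + 2\|B_{n,m} - \wt B_{n,m}\|_2 .
\]
Dividing by $\sqrt n$ and taking $\limsup_{n\to\infty}$ with $m$ fixed, the last term disappears, so
\[
\limsup_{n\to\infty}\frac{1}{\sqrt n}\|B_n - B_n^*\|_2 \le 2\left(\limsup_{n\to\infty}\frac1n\|B_n - B_{n,m}\|_2^2\right)^{1/2},
\]
where monotonicity and continuity of $t\mapsto\sqrt t$ let us pull the root through the $\limsup$. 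The left-hand side is independent of $m$; letting $m\to\infty$ and using the hypothesis $\lim_{m}\limsup_{n}\tfrac1n\|B_{n,m}-B_n\|_2^2 = 0$ forces it to vanish, i.e. $\|B_n - B_n^*\|_2 = o(\sqrt n)$. Therefore $\wt B_n = \tfrac12(B_n+B_n^*)$ is Hermitian with $\|B_n - \wt B_n\|_2 = o(\sqrt n)$, so $\{B_n\}_n$ is almost-Hermitian.

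I do not anticipate a real obstacle. The only points needing care are the bookkeeping in the splitting of $B_n - B_n^*$ (checking that the Hermitian contribution $\wt B_{n,m} - \wt B_{n,m}^*$ genuinely drops out) and the interchange of $\limsup_n$ with the square root; both are routine. The one mild "idea" is to resist trying to glue the various $\wt B_{n,m}$ into a single diagonal-type Hermitian sequence and instead observe that the $m$-free quantity $\|B_n - B_n^*\|_2$ makes the limit argument collapse immediately.
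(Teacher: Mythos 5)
Your proof is correct and takes essentially the same approach as the paper: both pass to the Hermitian part $\tfrac12(B_n+B_n^*)$ as the witness, bound $\|B_n - B_n^*\|_2$ via the triangle inequality through $B_{n,m}$ and its Hermitian approximant $\wt B_{n,m}$, and then take $\limsup_n$ followed by $m\to\infty$.
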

\begin{proof}
	From the definition of almost-Hermitian sequences, we can find $\wt B_{n,m}$ Hermitian matrices with
	$\|B_{n,m} - \wt B_{n,m} \|_2 = o(\sqrt n)$.	
	Let us now estimate the norm of the imaginary part of $\serie B$.
	\begin{align*}
	\|\Im(B_n) \|_2 &= \frac 12 \| B_n - B_n^*\|_2 \\
	&\le \frac 12 \left(
	\|B_n-B_{n,m}\|_2 +
	\|B_{n,m} - \wt B_{n,m}\|_2 	+
	\|\wt B_{n,m} - B_{n,m}^*\|_2 + \|B_{n,m}^* - B_n^*\|_2
	\right) \\
	&= \|B_n-B_{n,m}\|_2  +\|B_{n,m}  - \wt B_{n,m}\|_2\\
	\implies
	\limsup_{n\to\infty}\frac 1{\sqrt n}\|\Im(B_n) \|_2 &=
	\lim_{m\to \infty} \limsup_{n\to\infty} \frac 1{\sqrt n}\|\Im(B_n) \|_2 \\
	&\le
	\lim_{m\to \infty} \limsup_{n\to\infty} \frac 1{\sqrt n}
	\|B_n-B_{n,m}\|_2  +\frac 1{\sqrt n}\|B_{n,m}  - \wt B_{n,m}\|_2 = 0
	\\
	\implies \|\Im(B_n) \|_2&= o(\sqrt n).
	\end{align*}
	Since $B_n = \Re(B_n)  +\textnormal i\Im(B_n)$, the sequence $\serie B$ is an Hermitian sequence $\{\Re(B_n)\}_n$ plus a $o(\sqrt n)$ correction, thus it is an almost-Hermitian sequence.
\end{proof}

\subsection{Almost-Hermitian GLT Sequences}

The following result is formulated so that it can be applied to the problem at hand, but the same argument works also with $D_n(f_p)$ instead of $D_n'(f_p)$.

\begin{theorem}\label{res:fund_alm_her}
	Given any Riemann Integrable function $a:[0,1]\to\f C$ and any natural number $m$, denote
	\[
	A_n(a,m) := D_n'(a) T_n(e^{\textnormal im\theta}) + D_n'(a)^*  T_n(e^{\textnormal im\theta})^*,
	\]
	where
	$$D'_n(a):= \diag([a(x_i)]_{i=0,\dots,n-1})$$
	and $x_i=\frac i{n-1}$.
	In this case, if $a_0,a_1,\dots,a_{p-1}$ are Riemann Integrable functions, then
	\[
	\left\{ \frac 12 A_n(a_0,0) + \sum_{m=1}^{p-1}A_n(a_m,m) \right\}_n
	\]
	is an almost-Hermitian sequence for every positive number $p$.
\end{theorem}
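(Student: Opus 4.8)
The claim is that $S_n := \tfrac12 A_n(a_0,0) + \sum_{m=1}^{p-1} A_n(a_m,m)$ is almost-Hermitian, i.e.\ equals a Hermitian matrix plus a $o(\sqrt n)$ perturbation in Frobenius norm. The natural first move is to reduce to a single summand: since the almost-Hermitian sequences form a real vector space (Lemma \ref{res:alm_Her_vec_space}), it suffices to show that each $\{A_n(a_m,m)\}_n$ is almost-Hermitian. So fix $m\ge 0$ and a Riemann-integrable $a:[0,1]\to\f C$, and study $A_n(a,m)=D'_n(a)T_n(e^{\textnormal i m\theta})+D'_n(a)^*T_n(e^{\textnormal i m\theta})^*$.

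**The model case: $a$ real-valued.** If $a$ is real then $D'_n(a)$ is a real diagonal matrix, hence Hermitian, and $T_n(e^{\textnormal i m\theta})$ is the shift matrix with ones on the $m$-th subdiagonal, whose transpose is $T_n(e^{-\textnormal i m\theta})$. Then $A_n(a,m)=D'_n(a)T_n(e^{\textnormal i m\theta}) + (D'_n(a)T_n(e^{\textnormal i m\theta}))^*$ is \emph{exactly} Hermitian, so there is nothing to prove. The point of the theorem is therefore the non-Hermitian part coming from $\Im(a)$: writing $a=\Re(a)+\textnormal i\,\Im(a)$ and using the vector-space structure again, it remains only to handle the case where $a$ is \emph{purely imaginary}, say $a=\textnormal i b$ with $b:[0,1]\to\f R$ Riemann-integrable. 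For that case $D'_n(a)^*=-D'_n(a)$, so
\[
A_n(a,m) = D'_n(a)T_n(e^{\textnormal i m\theta}) - D'_n(a)T_n(e^{\textnormal i m\theta})^* = \textnormal i\, D'_n(b)\bigl(T_n(e^{\textnormal i m\theta}) - T_n(e^{-\textnormal i m\theta})\bigr).
\]
The matrix $T_n(e^{\textnormal i m\theta})-T_n(e^{-\textnormal i m\theta})$ is the skew-symmetric matrix with $+1$ on the $m$-th subdiagonal and $-1$ on the $m$-th superdiagonal; call it $J_{n,m}$. So I must show $\{\textnormal i\,D'_n(b)J_{n,m}\}_n$ is almost-Hermitian.

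**The heart of the matter: a commutator estimate.** The matrix $\textnormal i\, D'_n(b)J_{n,m}$ is close to being Hermitian because $J_{n,m}$ is skew-Hermitian and multiplication by $\textnormal i$ turns that into Hermitian; the obstruction is that $D'_n(b)$ and $J_{n,m}$ do not commute. Concretely, the Hermitian part of $\textnormal i\, D'_n(b)J_{n,m}$ is $\tfrac12\textnormal i\,(D'_n(b)J_{n,m} + J_{n,m}D'_n(b))$ (using $J_{n,m}^*=-J_{n,m}$, $D'_n(b)^*=D'_n(b)$), and the difference between $\textnormal i\,D'_n(b)J_{n,m}$ and this Hermitian part is $\tfrac12\textnormal i\,[D'_n(b),J_{n,m}]$. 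Entrywise, $[D'_n(b),J_{n,m}]$ has nonzero entries only on the $\pm m$ diagonals, where the entry in row $i$ is $\pm\bigl(b(x_i)-b(x_{i\mp m})\bigr) = \pm\bigl(b(x_i)-b(x_{i}\mp \tfrac m{n-1})\bigr)$. Hence
\[
\|[D'_n(b),J_{n,m}]\|_2^2 \;\le\; 2\sum_{i} \bigl(b(x_i)-b(x_{i- m})\bigr)^2 \;+\; 2\sum_i\bigl(b(x_i)-b(x_{i+m})\bigr)^2,
\]
a sum of at most $2n$ terms each measuring the oscillation of $b$ over a grid-window of width $m/(n-1)\to 0$. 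The crux is to show this is $o(n)$. When $b$ is continuous this is immediate since each term is at most $\omega_b(m/(n-1))^2$, which $\to 0$; the real work is the general Riemann-integrable case. I would do it by the standard a.c.s.\ argument already used in Lemma \ref{res:diag_GLT}: approximate $b$ in $L^1([0,1])$ by continuous $b_\varepsilon$, split $b = b_\varepsilon + (b-b_\varepsilon)$, handle $b_\varepsilon$ by the continuity-modulus bound, and control the tail via $\frac1n\sum_i |(b-b_\varepsilon)(x_i)|\to \int_0^1|b-b_\varepsilon| < \varepsilon$ together with a telescoping/rearrangement bound on $\sum_i\bigl((b-b_\varepsilon)(x_i) - (b-b_\varepsilon)(x_{i-m})\bigr)^2 \le 4m\sum_i(b-b_\varepsilon)(x_i)^2$ — wait, that last bound needs $b-b_\varepsilon$ bounded, which Riemann-integrability grants — more carefully, $\sum_i (c(x_i)-c(x_{i-m}))^2 \le 2\sum_i c(x_i)^2 + 2\sum_i c(x_{i-m})^2 \le 4\|c\|_\infty \sum_i |c(x_i)|$, and $\frac1n\sum_i|c(x_i)| \to \int_0^1 |c| $ for Riemann-integrable $c$.

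**Expected obstacle and wrap-up.** The main obstacle is precisely this last estimate: showing $\frac1n\|[D'_n(b),J_{n,m}]\|_2^2 \to 0$ for merely Riemann-integrable $b$, where one cannot invoke a uniform continuity modulus. I expect the clean route is not to prove it by hand but to phrase it as an a.c.s.\ statement: $\{D'_n(b_\varepsilon)J_{n,m}\}_n \acs \{D'_n(b)J_{n,m}\}_n$ (immediate from Theorem \ref{res:acs_crit} and $\frac1n\|D'_n(b-b_\varepsilon)J_{n,m}\|_2^2 \le \frac{2m}{n}\sum_i(b-b_\varepsilon)(x_i)^2 \to 2m\!\int|b-b_\varepsilon|^2$, which we must instead bound using boundedness: $\le 2m\|b-b_\varepsilon\|_\infty\int|b-b_\varepsilon|$ — choosing $b_\varepsilon$ with $\|b-b_\varepsilon\|_\infty$ controlled is possible since $b$ is Riemann-integrable hence bounded, and continuous functions are $L^1$-dense among bounded functions with a uniform sup-norm cap), and then invoke Lemma \ref{res:convergence_alm_Her}: each $\{\textnormal i\,D'_n(b_\varepsilon)J_{n,m}\}_n$ is almost-Hermitian (by the continuous case), and the a.c.s.-type $L^2$ closeness transfers almost-Hermitianness to the limit. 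Once each $\{A_n(a_m,m)\}_n$ is known to be almost-Hermitian, Lemma \ref{res:alm_Her_vec_space} finishes the proof of the theorem for any $p$.
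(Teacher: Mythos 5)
There is a genuine gap at the very first reduction. You write $A_n(a,m)=D'_n(a)T_n(e^{\textnormal i m\theta}) + \bigl(D'_n(a)T_n(e^{\textnormal i m\theta})\bigr)^*$ and conclude that for real $a$ this is exactly Hermitian. But the theorem defines the second term as $D'_n(a)^*T_n(e^{\textnormal i m\theta})^*$, with the diagonal factor still on the \emph{left}; this is $D'_n(a)^*T_n(e^{-\textnormal i m\theta})$, whereas $\bigl(D'_n(a)T_n(e^{\textnormal i m\theta})\bigr)^*=T_n(e^{-\textnormal i m\theta})D'_n(a)^*$ has the factors in the opposite order. For real $a$ one gets $A_n(a,m)=D'_n(a)\bigl(T_n(e^{\textnormal i m\theta})+T_n(e^{-\textnormal i m\theta})\bigr)$, which is a diagonal matrix times a symmetric matrix and is \emph{not} Hermitian unless the two factors commute. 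So the real/imaginary split does not dispose of the real part for free: the real part needs the same commutator estimate you develop for the imaginary part, namely that $\frac1n\bigl\|[D'_n(c),T_n(e^{\textnormal i m\theta})]\bigr\|_2^2\to 0$ for Riemann-integrable $c$. Once you notice this, there is actually no benefit to splitting $a$ at all: the anti-Hermitian part of $A_n(a,m)$ is $\tfrac12\bigl([D'_n(a),T_n(e^{\textnormal i m\theta})]+[D'_n(a)^*,T_n(e^{\textnormal i m\theta})^*]\bigr)$, and your Frobenius estimate applies directly.

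Apart from that oversight, your core idea — that the obstruction is a commutator whose Frobenius norm is $\sum_i|a(x_i)-a(x_{i-m})|^2$, and that this is $o(n)$ — is exactly right, and in fact the paper's $\widetilde A_n(a,m)$ is precisely the reordered product $T_n(e^{\textnormal i m\theta})D'_n(a)+\bigl(T_n(e^{\textnormal i m\theta})D'_n(a)\bigr)^*$, so the paper's $Z_n$ \emph{is} your commutator. Where you genuinely diverge from the paper is in how you treat the Riemann-integrable case: you propose approximating $b$ in $L^1$ by continuous functions with a uniform sup-norm cap, proving almost-Hermitianness for the approximants via the continuity modulus, and passing to the limit through Lemma~\ref{res:convergence_alm_Her}. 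The paper instead works directly with the $\alpha$-oscillation $\omega_\alpha$ of $a$ (using Lemma~\ref{res:aux_osc} to control the set where $\omega_\alpha\ge\varepsilon$) and bounds $\frac1n\|Z_n\|_2^2$ by $8\varepsilon M^2+\varepsilon^2$ without any approximation step. Your route is a legitimate alternative — it trades the pointwise oscillation argument for an a.c.s.-style limiting argument, reusing machinery that the paper needs elsewhere anyway — but you must fix the real/imaginary reduction (or drop it) before the proof is complete.
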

\begin{proof}
	First of all, 	
	from \textbf{GLT 3,4} and Lemma \ref{res:diag_GLT}, we know that
	\[
	\{A_n(a,m) \}_n\GLT 2\Re(a(x)e^{\textnormal im\theta}).
	\]
	If $m=0$, then $A_n(a,m)$ is Hermitian for every $n$, so the thesis follows.
	Suppose now that $m>0$ and define the Hermitian matrix $\wt A_n(a,m)$ as
	\[
	[\wt A_n(a,m)]_{i,j} =
	\begin{cases}
	a(x_{j-1}), & i-j = m,\\
	\ol a(x_{i-1}), & i-j = -m,\\
	0, &\text{otherwise.}
	\end{cases}
	\]
	Let $Z_n = A_n(a,m) -\wt A_n(a,m)$, and notice that
	\[
	[Z_n]_{i,j} =
	\begin{cases}
	a(x_{i-1}) - a(x_{j-1}), & i-j = m,\\
	0, &\text{otherwise.}
	\end{cases}
	\]
	Notice that if $\omega_\alpha$ is the $\alpha$-oscillation relative to $a$, defined as
	\[
	\omega_\alpha(x) := \sup_{z\in B_\alpha(x) \cap [0,1]} |a(z)|  -\inf_{z\in B_\alpha(x) \cap [0,1]} |a(z)|,
	\]
	then $\omega_\alpha \xrightarrow{\alpha\to 0} 0$ a.e. since
	\[
	a \text{ continuous on }x\implies \omega_\alpha(x) \xrightarrow{\alpha\to 0} 0.
	\]
	We can thus fix $\ve >0$ and find $\alpha_\ve$ (that we call $\alpha$ for simplicity) such that
	\[
	E:= \set{x\in [0,1] : \omega_\alpha(x) \ge\ve   }, \quad 	\mu(E)  <\ve/2.
	\]
	Notice that $a$ is bounded and continuous a.e., so by Lemma \ref{res:aux_osc}, $\omega_\alpha$ is also continuous a.e. and thus $E^C$ is an open set up to a negligible set.
	Every open set can be approximated from the inside by a finite union of open intervals, so we can take $G\cu E^C$ a finite union of open intervals with measure $\mu(G)>\mu(E^C)- \ve/2>1-\ve$. We can approximate the measure of $G$ as
	\[
	\lim_{n\to\infty} \frac 1n \#\set{ i |  0\le i\le n-1, x_i \in G  } =\mu (G) >1-\ve.
	\]
	Let $N$ be an index such that $m/N <\alpha$ and
	\[
	\frac 1n \#\set{ i |  0\le i\le n-1, x_i \in G  }  >1-2\ve \quad \forall n>N.
	\]
	If $\|a\|_\infty = M$ and $n>N$, we have
	\begin{align*}
	\|Z_n\|_2^2 &= \sum_{j=1}^{n-m} |a(x_{j+m-1}) - a(x_{j-1})|^2\\
	&= \sum_{j\le n-m}^{x_{j-1}\not\in G} |a(x_{j+m-1}) - a(x_{j-1})|^2 + \sum_{j\le n-m}^{x_{j-1}\in G} |a(x_{j+m-1}) - a(x_{j-1})|^2\\
	&\le
	\#\set{ i |  0\le i\le n-1, x_i \in G^C  } \cdot  4M^2 +
	\sum_{j\le n-m}^{x_{j-1}\in E^C} \omega_\alpha(x_{j-1})^2
	\\
	&\le  8\ve n M^2 + n \ve^2.
	\end{align*}
	As a consequence
	\[
	\limsup_{n\to\infty} \frac 1n\|Z_n\|_2^2 \le 8\ve  M^2 +  \ve^2
	\]
	for every $\ve >0$, so
	\[
	\limsup_{n\to\infty} \frac 1n\|Z_n\|_2^2 = 0.
	\]
	We have thus shown that $\{ A_n(a,m) \}_n$ is  almost-Hermitian, and Lemma \ref{res:alm_Her_vec_space} let us conclude that 	
	\[
	\left\{ \frac 12 A_n(a_0,0) + \sum_{m=1}^{p-1}A_n(a_m,m) \right\}_n
	\]
	is also an almost-Hermitian sequence for every positive number $p$.
\end{proof}

\end{document}